\newtheorem{theorem}{Theorem}[section]
\newtheorem{lemma}[theorem]{Lemma}
\newtheorem{corollary}[theorem]{Corollary}
\newtheorem{proposition}[theorem]{Proposition}
\theoremstyle{definition}
\newcommand{\C}{\mathbb{C}}
\newcommand{\D}{\mathbb{D}}
\newcommand{\sD}{\mathscr{D}}
\newcommand{\mD}{\mu_{\mathscr{D}_n}}
\newcommand{\Cn}{\mathbb{C}^n}
\newcommand{\autd}{Aut (\mathbb{D})}
\theoremstyle{remark}
\numberwithin{equation}{section}
\begin{document}

\title{Pick interpolation and invariant functions}

\author{Anindya Biswas}
\address{Department of Mathematics and Statistics, Masaryk University, Brno}
\address{ORCID: 0000-0002-7805-9446}
\email{biswas@math.muni.cz}
\thanks{}



\subjclass[2010]{Primary 32F45, Secondary 32E30}

\keywords{Invariant functions, Carath\'eodory hyperbolic, Pick interpolation, Interpolation body}

\date{}

\dedicatory{}

\begin{abstract}

In this article, we establish a connection between Pick bodies and invariant functions. We demonstrate that an invariant function can be associated with any Pick body, which determines the solvability of a given Pick interpolation problem and serves as a generalization of the Carathéodory pseudodistance. A complete description of this invariant function is provided for the open unit disc, and it is shown that it leads to another invariant function that can be regarded as a generalized Lempert function. It is also proved that these two invariant functions are equal if certain geodesics can be found. Lastly, we show that, in a very special case, a result analogous to Lempert's theorem holds for the bidisc and the tridisc.

\end{abstract}

\maketitle

\section{Introduction} 
Suppose that $\Omega$ is a Carath\'eodory hyperbolic domain in the Euclidean space $\mathbb{C}^m$ for some positive integer $m$. We consider mutually distinct $n$ points $z_1,\ldots, z_n$ in $\Omega$ and $n$ arbitrary points in the open unit disc $\D$. The Pick interpolation problem asks for a necessary and sufficient condition for the existence of a holomorphic function $f:\Omega\rightarrow\D$ satisfying $f(z_j)=w_j,a\leq j\leq n$. When $\Omega=\D$, the problem is independently solved by Pick in 1916 (\cite{Pick1916}), and Nevanlinna in 1919(\cite{Nevanlinna1919}). Their result says that the problem $\D\ni z_j\mapsto w_j\in \D$ is solvable if and only if the (Pick-)matrix 
\begin{align}\label{PickMatrix}
	\begin{pmatrix}
		\frac{1-w_i\overline{w_j}}{1-z_i\overline{z_j}}
	\end{pmatrix}_{1\leq i,j\leq n} 
\end{align}
is positive semi-definite. Moreover, $rank\begin{pmatrix}
	\frac{1-w_i\overline{w_j}}{1-z_i\overline{z_j}}
\end{pmatrix} \leq n-1$ if and only if the interpolation problem has a unique solution and the solution is a Blaschke product of degree $rank\begin{pmatrix}
\frac{1-w_i\overline{w_j}}{1-z_i\overline{z_j}}
\end{pmatrix}$. One of the most elementary proofs of this result can be found in \cite{Marshall}. Interested reader may also consult the book by Agler and McCarthy (\cite{A-M}) for a detailed account of this problem and of advancements in this topic.

A huge part of the research in this direction is influenced by operator theory which is apparent by the account in \cite{A-M} and the references therein. Using operator theoretic techniques, Agler and McCarthy found a framework that gives valuable information on Pick interpolation problem on the bidisc $\D^2$ (\cite{A-M-1999}). The importance of their work is realized by the fact that their treatment can be applied to other domains like the symmetrized bidisc (\cite{B-S}) and annulus (\cite{D-M}). However, due to the abstractness of their treatment, it does not solve the Pick interpolation problem fully. A partial solution related to these ideas may be found in \cite{A-M-2000}.

Though influenced by operator theory, a different approach was pursued by Cole, Lewis and Wermer (\cite{CLW1992}, \cite{CLW1993}, \cite{CW1993}, \cite{CW1996}, \cite{CW1997}). They mainly used the notion of IQ-algebras (certain quotient algebras, see \cite{Bon-Dun}) and properties of Hardy-Hilbert space on $\D$ to study the problem.The object of their study is known as the Pick body or the interpolation body. Apart from characterizing the Pick bodies on $\D$, their works give additional information about Agler and McCarthy's work on the bidisc. 

Given nodes $z_1,\ldots,z_n\in \D$, the functions 
\begin{align*}
	\varphi_j (\lambda)=\prod_{i\neq j}\frac{z_i -\lambda}{1-\overline{z_i}\lambda}
\end{align*}
are one of the building blocks in Cole, Lewis and Wermer's approach through IQ-algebras. It is apparent from their work that these functions and their generalization have fundamental role in the study of Pick interpolation problem. It turns out these functions are also an important object of study in the topic of invariant functions (\cite{Coman2000}, \cite{J-P-Invariant}). We will come back to it later.

A moment's thought will reveal that holomorphic invariance has great implications in Pick interpolation problem. For instance, by the Riemann mapping theorem, solving an interpolation problem in $\mathcal{O}(\D,\D)$ (space of all holomorphic functions from $\D$ to $\D$) is equivalent to solving an interpolation problem in $\mathcal{O}(D_1,D_2)$  where $D_1$ and $D_2$ are simply connected domains in the plane $\C$. Apart from this, we see that the two-point Pick interpolation problem is completely solved for a number of domains using the notion of Carath\'eodory pseudodistance $c^*$. For a Carath\'eodory hyperbolic domain $\Omega\subset \C^m$, $z_1,z_2\in \Omega$, and $w_1,w_2\in \D$, there exists an $f\in \mathcal{O}(\Omega,\D)$ satisfying $f(z_j)=w_j\,\, (j=1,2)$ if and only if 

\begin{align}\label{TwoPointProblem}
	m(w_1,w_2)=\Big|\frac{w_1 -w_2}{1-\overline{w_1}w_2}\Big|\leq c^*_\Omega(z_1,z_2)
\end{align}. This holds because of the definition 
\begin{align*}
	c^* _\Omega (z_1,z_2)=sup\{m(g(z_1),g(z_2)):g\in \mathcal{O}(\Omega,\D)\}.
\end{align*}
For domains like the polydisc $\D^m$, the Euclidean ball $\mathbb{B}_m$ and the symmetrized bidisc $\mathbb{G}_2\subset \C^2$, $c^*$ is known (see \cite{J-P-Invariant}) and hence the two point interpolation problem is solved for these domains. For more than two points, there is a generalization of $c^*$ (\cite{Coman2000}, \cite{J-P-Invariant}) which is being studied for the last few decades. Unfortunately, its role in Pick interpolation is not significant yet, though the framework is quite interesting. Our work in a way generalizes this idea.

A notion which is quite useful in the study of both interpolation problems and invariant functions is known as $n$-geodesic (\cite{Agler-Lykova-Young-2013}). Its importance can be realized from Lempert's theorem where one studies the $2$-geodesics, and from recent developments on three-point interpolation problem. Apart from the disc, the three-point interpolation problem is completely solved for the polydisc and the Euclidean ball (\cite{KosinskiPoly}, \cite{Kosinski-Zwonek-Ball}), and in both cases, it is seen that the notion of geodesics is indispensable. We will see that the notion of $n$-geodesic plays a role in our setting as well.

The main part of this article is divided into three sections. In Section \ref{Decription_of_d} we describe two invariant functions associated with Pick bodies on Carath\'eodory hyperbolic domains. When the functions are considered on the open unit disc, they coincide and we discuss its values in Section \ref{Values_Of_d_For_D}. In Section \ref{D2_D3_description} we show that a result similar to Lempert's theorem holds for a very special case.

\section{Description of the invariant functions}\label{Decription_of_d}
For an arbitrary Carath\'eodory hyperbolic domain $\Omega$ in some $\C^m$, we take $n$ distinct points $z_1,\ldots,z_n\in \Omega$, and we denote by $B^\infty_1$ the interior of $\mathcal{O}(\Omega, \D)$, that is, the set of all bounded holomorphic functions on $\Omega$ with norm strictly less than one. We now construct the following set
\begin{align}\label{OpenPickBody}
	\mathscr{D}_\Omega (z_1,\ldots,z_n)=\{(f(z_1)\ldots,f(z_n)):f\in B^\infty_1\}.
\end{align}
We will often write $\mathscr{D}_n$ for $\mathscr{D}_\Omega(z_1,\ldots,z_n)$ when it is clear from the context. Note that $$\overline{\mathscr{D}_n}=\{(f(z_1)\ldots,f(z_n)):f\in \mathcal{O}(\Omega, \overline{\D})\},$$
and the boundary of $\mathscr{D}_n$, denoted by $\partial \mathscr{D}_n$, consists of the points that can not be interpolated by functions with norm strictly less than one.

Let us begin with describing a few properties of the set $\sD_n$. The first one is trivial

\begin{proposition}\label{DnIsConvexOpen}
	$\mathscr{D}_n$ is an open balanced convex subset of the unit polydisc $\D^n$.
\end{proposition}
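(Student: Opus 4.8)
The containment $\sD_n\subseteq\D^n$ together with the two algebraic properties is immediate from the definitions, so I would dispose of it first. Write $E(h):=(h(z_1),\dots,h(z_n))$ for the (linear) evaluation map, so that $\sD_n=E(B^\infty_1)$. If $f\in B^\infty_1$ then $|f(z_j)|\le\|f\|_\infty<1$ for every $j$, which gives $\sD_n\subseteq\D^n$. If $f,g\in B^\infty_1$ and $t\in[0,1]$, then $tf+(1-t)g$ again has sup-norm $<1$ and $E\big(tf+(1-t)g\big)=tE(f)+(1-t)E(g)$, which gives convexity; and $\lambda f\in B^\infty_1$ whenever $|\lambda|\le1$, which gives $\lambda\sD_n\subseteq\sD_n$, i.e.\ balancedness.

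The substantive point is openness. I would reduce it to the existence of bounded holomorphic functions $h_1,\dots,h_n$ on $\Omega$ with $h_i(z_j)=\delta_{ij}$. Granting these, fix $w=E(f)\in\sD_n$, put $\varepsilon:=1-\|f\|_\infty>0$ and $M:=1+\max_i\|h_i\|_\infty$, and for $w'\in\C^n$ define $g:=f+\sum_{i=1}^n(w'_i-w_i)h_i$. Then $E(g)=w'$ while $\|g\|_\infty\le\|f\|_\infty+M\sum_i|w'_i-w_i|$, so whenever $\sum_i|w'_i-w_i|<\varepsilon/M$ we have $g\in B^\infty_1$ and hence $w'\in\sD_n$; this exhibits a Euclidean ball around $w$ inside $\sD_n$. (Equivalently: the claim says the bounded linear map $E\colon H^\infty(\Omega)\to\C^n$ is onto, and then openness of $\sD_n=E(B^\infty_1)$ is the open mapping theorem.)

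So everything hinges on producing the $h_i$, which is where the hypothesis that $\Omega$ is Carath\'eodory hyperbolic is used, and the step I expect to be the only real obstacle. Since $\Omega$ is Carath\'eodory hyperbolic and the $z_i$ are distinct, for each pair $i\neq j$ there is $\psi_{ij}\in\mathcal{O}(\Omega,\D)$ with $\psi_{ij}(z_i)\neq\psi_{ij}(z_j)$; replacing $\psi_{ij}$ by $\big(\psi_{ij}-\psi_{ij}(z_j)\big)/\big(\psi_{ij}(z_i)-\psi_{ij}(z_j)\big)$ we may assume $\psi_{ij}(z_i)=1$ and $\psi_{ij}(z_j)=0$, still bounded and holomorphic. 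Then $h_i:=\prod_{j\neq i}\psi_{ij}$ is a finite product of bounded holomorphic functions and satisfies $h_i(z_k)=\delta_{ik}$. (In algebraic terms, $E\big(H^\infty(\Omega)\big)$ is a unital subalgebra of $\C^n$ separating coordinates, hence all of $\C^n$ by the finite-dimensional Stone--Weierstrass argument, and the $h_i$ are the preimages of the standard basis vectors.) With the $h_i$ in hand the estimate above finishes the proof.
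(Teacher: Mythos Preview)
Your proof is correct. The paper itself declares the proposition ``trivial'' and offers no argument at all, so your write-up supplies considerably more than the paper does: you correctly isolate openness as the only step with content, and your construction of the Lagrange-type interpolants $h_i$ via Carath\'eodory hyperbolicity (bounded holomorphic functions separate points) followed by the perturbation estimate is exactly the right way to fill that gap.
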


Let us recall a couple of facts about the Minkowski functional $\mu_{\mathscr{D}_n}$ of $\mathscr{D}_n$ (\cite{J-P-Invariant}, Chapter 2).
\begin{enumerate}
	\item $\mu_{\mathscr{D}_n}$ is a norm on $\mathbb{C}^n$.
	\item The boundary of $\mathscr{D}_n$ is given by $\partial \mathscr{D}_n=\{X\in \mathbb{C}^n: \mu_{\mathscr{D}_n} (X)=1\}$.
\end{enumerate}

Our next result gives a description of $\mu_{\mathscr{D}_n}$ in terms of the Banach algebra $H^\infty (\Omega)$ of all bounded holomorphic functions on $\Omega$.

\begin{theorem}
	For any $\underline{w}=(w_1,\ldots,w_n)\in \Cn$, $$\mD(\underline{w})=inf \{||g||:g\in H^\infty (\Omega), g(z_j)=w_j, j=1,\ldots,n\}.$$
\end{theorem}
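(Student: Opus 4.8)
The plan is to establish the two inequalities separately, exploiting the scaling behaviour of the Minkowski functional and the definition of $\mathscr{D}_n$ via $B^\infty_1$.

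First I would prove $\mu_{\mathscr{D}_n}(\underline{w}) \leq \inf\{\|g\| : g\in H^\infty(\Omega),\ g(z_j)=w_j\}$. Take any $g\in H^\infty(\Omega)$ with $g(z_j)=w_j$ for all $j$, and assume $g$ is not identically zero (the zero case forcing $\underline{w}=0$, where both sides vanish). For any $t > \|g\|$, the function $g/t$ lies in $B^\infty_1$, so $(g(z_1)/t,\ldots,g(z_n)/t) = \underline{w}/t \in \mathscr{D}_n$, which means $\mu_{\mathscr{D}_n}(\underline{w}/t) < 1$, i.e. $\mu_{\mathscr{D}_n}(\underline{w}) < t$ by homogeneity of the Minkowski functional. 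Letting $t \downarrow \|g\|$ gives $\mu_{\mathscr{D}_n}(\underline{w}) \leq \|g\|$, and taking the infimum over all such $g$ yields the first inequality.

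For the reverse inequality $\mu_{\mathscr{D}_n}(\underline{w}) \geq \inf\{\|g\| : \ldots\}$, I would argue as follows. If $\mu_{\mathscr{D}_n}(\underline{w}) = s$, then for every $t > s$ we have $\underline{w}/t \in \mathscr{D}_n$, so by the definition \eqref{OpenPickBody} there is $f \in B^\infty_1$ with $f(z_j) = w_j/t$ for each $j$. Then $g := tf$ satisfies $g(z_j) = w_j$ and $\|g\| = t\|f\| < t$, so $g$ is admissible and witnesses $\inf\{\|g\| : \ldots\} < t$. Letting $t \downarrow s$ gives $\inf\{\|g\| : \ldots\} \leq s = \mu_{\mathscr{D}_n}(\underline{w})$. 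Combining the two inequalities proves the theorem. One should also handle the degenerate case: if no $g\in H^\infty(\Omega)$ interpolates $\underline{w}$ at all, the infimum on the right is $+\infty$; but constant functions show interpolation is always possible when $n=1$, and for general $n$ one needs $\underline{w}$ to be an actual point of $\overline{\mathscr{D}_n}$ scaled up — in fact any $\underline{w}\in\C^n$ is interpolated by \emph{some} bounded $g$ precisely because $\mathscr{D}_n$ is absorbing (it is open and balanced, by Proposition \ref{DnIsConvexOpen}), so $\underline{w}/t\in\mathscr{D}_n$ for $t$ large, and this is exactly the content used above.

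I do not anticipate a serious obstacle here; the statement is essentially an unwinding of definitions once one observes the correspondence $g \leftrightarrow g/t$ between interpolating functions of norm $< t$ and points of $t\,\mathscr{D}_n$. The only point requiring a little care is the passage between strict and non-strict inequalities — $B^\infty_1$ consists of functions of norm \emph{strictly} less than one, while the infimum on the right is over \emph{all} of $H^\infty(\Omega)$ — which is handled cleanly by the limiting arguments $t \downarrow \|g\|$ and $t \downarrow s$ above, using the continuity/homogeneity of $\mu_{\mathscr{D}_n}$ (and the fact, recalled before the theorem, that $\mu_{\mathscr{D}_n}$ is a genuine norm on $\C^n$).
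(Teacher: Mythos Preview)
Your proposal is correct and is essentially the same as the paper's proof: both directions rely on the scaling correspondence $g\leftrightarrow g/t$ between interpolating functions and points of $\mathscr{D}_n$, and your observation that $\mathscr{D}_n$ is absorbing matches the paper's opening remark that every $\underline{w}\in\C^n$ is interpolated by some $g\in H^\infty(\Omega)$. The only cosmetic difference is that the paper phrases the second inequality as a contradiction (assume $\mu_{\mathscr{D}_n}(\underline{w})<l$ and derive $\|lf\|<l$) whereas you write it as a direct limit $t\downarrow s$; these are equivalent.
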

\begin{proof}
	Observe that for any $\underline{w}\in \Cn$, there is a $g\in H^\infty (\Omega)$ such that $w_j=g(z_j)$ for all $j=1,\ldots,n$. For such a $g$ we have $\frac{1}{||g||} \underline{w}\in \overline{\sD _n}$. And hence, we have $\mD (\underline{w})\leq ||g||$ for all such $g$. 
	
	Let $l=\inf \{||g||:g\in H^\infty (\Omega), g(z_j)=w_j, j=1,\ldots,n\}$ and suppose that $\mD (\underline{w})<l$. Then $\mD (\frac{1}{l}\underline{w})<1$ and hence $\frac{1}{l} \underline{w}\in \sD_n$. By definition, there is an $f\in B^\infty_1$ such that $\frac{w_j}{l}=f(z_j)$ So $w_j =lf(z_j)$ for all $j$, $lf\in H^\infty (\Omega)$ and $||lf||<l$. This contradicts our assumption. Hence $\mD (\underline{w})=l$ and this concludes the proof.
\end{proof}

	Let $I=I(z_1,\ldots,z_n)$ be the ideal in the Banach algebra $H^\infty (\Omega)$ consisting of the functions that vanish at each $z_j, 1\leq j\leq n$. The result above says that the normed spaces $(\Cn,\mD)$ and $(H^\infty (\Omega)/I,||\cdot||_q)$ are isometrically isomorphic, where $||\cdot||_q$ is the quotient norm. Moreover, $\mD$ is a Banach algebra norm (i.e., sub-multiplicative) on $\Cn$ with component-wise product. Using Montel's theorem, it is not hard to see that $\mD$ is attained, that is, for a given $\underline{w} \in \Cn$, there exists a $g\in \mathcal{O}(\D,\D)$ such that $g(z_j)=w_j$ for all $j$ and $\mD (\underline{w})=||g||$.

We now want to focus on an old domain introduced by H. Cartan \cite{Cartan1932} and later studied by several mathematicians in different context and form (see \cite{BiswasMaitraBidisc}, \cite{CLW1992}, \cite{Isaev23}). The domain is given by 
\begin{align}\label{D2}
	\D^2 _r=\Big\{(w_1,w_2)\in\D^2:m(w_1,w_2)=\Big|\frac{w_1 -w_2}{1-\overline{w_1}w_2}\Big|<r\Big\},r\in (0,1).
\end{align}
If $\autd$ denotes the group of automorphisms of $\D$, then the connected identity component of $Aut(\D^2 _r)$ is the collection of the maps $\Phi_\varphi:(w_1,w_2)\mapsto(\varphi(w_1),\varphi(w_2)),\varphi\in \autd$ (see \cite{BiswasMaitraBidisc}). This idea can be applied to $\sD_n$ in the following way: we consider a $\varphi\in \autd$ and define $\Phi_\varphi:\mathbb{D}^n \rightarrow\mathbb{D}^n$ as
\begin{align}\label{AutosOfDnAssoctdWithAutD}
	\Phi_\varphi (w_1,\ldots,w_n)=(\varphi(w_1),\ldots,\varphi(w_n)).
\end{align}
Since $f\in B^\infty_1$ if and only if $\varphi\circ f\in B^\infty _1$, we conclude that the restriction $\Phi_\varphi |_{\mathscr{D}_n}$ is an automorphism of $\mathscr{D}_n$.

Now we will see that the domains $\sD_n$ can lead to an invariant function. First we see how $\sD_2$ is related to the Carath\'eodory pseudodistance $c^*$.
\begin{theorem}\label{D2Cara}
	For two distinct points $z_1,z_2\in \Omega$ we have
	\begin{align}\label{D2}
		\sD_\Omega (z_1,z_2)=\sD_2=\D^2 _{c^*_\Omega (z_1,z_2)}.
	\end{align}
\end{theorem}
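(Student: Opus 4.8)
The plan is to show the two-sided inclusion $\sD_2 = \D^2_{c^*_\Omega(z_1,z_2)}$ by unwinding the definitions of both sets and exploiting the characterization of the two-point interpolation problem via $c^*$ recorded in \eqref{TwoPointProblem}. Write $c^* = c^*_\Omega(z_1,z_2)$ for brevity. By definition, a point $(w_1,w_2) \in \D^2$ lies in $\sD_2$ precisely when there exists $f \in B^\infty_1$ with $f(z_1) = w_1$, $f(z_2) = w_2$; and $(w_1,w_2) \in \D^2_{c^*}$ precisely when $m(w_1,w_2) < c^*$. So the theorem amounts to the assertion: there is a holomorphic $f : \Omega \to \D$ with $\|f\| < 1$ interpolating $z_j \mapsto w_j$ if and only if $m(w_1,w_2) < c^*$.

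For the inclusion $\sD_2 \subseteq \D^2_{c^*}$: given $f \in B^\infty_1$ with $f(z_j) = w_j$, the contractive property of $f$ (more precisely, the Schwarz--Pick inequality applied to the composition $g \circ f$ for $g \in \mathcal{O}(\D,\D)$, or directly the definition of $c^*$ as a supremum over $\mathcal{O}(\Omega,\D)$) gives $m(w_1,w_2) = m(f(z_1),f(z_2)) \le c^*_\Omega(z_1,z_2)$. To get the strict inequality, I would use that $\|f\| < 1$: the image $f(\Omega)$ lies in a disc $r\D$ with $r < 1$, so $\frac{1}{r} f \in \mathcal{O}(\Omega,\D)$, and the Schwarz--Pick distance on $r\D$ is strictly smaller than on $\D$; quantitatively, $m(w_1,w_2) = m(f(z_1),f(z_2)) \le r\, m\big(\tfrac{1}{r}f(z_1), \tfrac{1}{r}f(z_2)\big) \le r\, c^* < c^*$. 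Hence $(w_1,w_2) \in \D^2_{c^*}$.

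For the reverse inclusion $\D^2_{c^*} \subseteq \sD_2$: suppose $m(w_1,w_2) < c^*$. First I would recall that the supremum defining $c^*_\Omega(z_1,z_2)$ is attained — this follows from Montel's theorem (the same normal-families argument alluded to after the previous theorem for $\mD$): there is $g_0 \in \mathcal{O}(\Omega,\D)$ with $m(g_0(z_1),g_0(z_2)) = c^*$. Composing with an automorphism of $\D$, we may assume $g_0(z_1) = a$, $g_0(z_2) = -a$ for a suitable $a \ge 0$ with $\frac{2a}{1+a^2}$... actually more cleanly: after an automorphism we arrange $g_0(z_1) = 0$, and then $|g_0(z_2)| = c^*$. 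Now since $m(w_1,w_2) < c^*$, the classical two-point Pick problem on $\D$ together with this $g_0$ lets us find the interpolant: compose $g_0$ with an automorphism sending $0 \mapsto w_1$; the resulting $g_1 \in \mathcal{O}(\Omega,\D)$ has $g_1(z_1) = w_1$ and $m(g_1(z_1), g_1(z_2)) = c^*$, so $g_1(z_2)$ lies on the Möbius circle of radius $c^*$ about $w_1$, which strictly contains $w_2$ in its interior since $m(w_1,w_2) < c^*$. Then a further Möbius adjustment inside $\D$ (shrinking toward $w_1$, or using that $\mathcal{O}(\D,\D)$ acts transitively enough) produces $f = \psi \circ g_1$ with $f(z_1) = w_1$, $f(z_2) = w_2$, and — crucially — $\|f\| < 1$, because the relevant $\psi$ can be chosen to map $\overline{\D}$ into $r\D$ for some $r<1$ while still respecting the two interpolation constraints, precisely because there is strict slack $m(w_1,w_2) < c^*$.

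The main obstacle is the last step: arranging $\|f\| < 1$ rather than merely $\|f\| \le 1$. The slack $m(w_1,w_2) < c^*$ must be converted into room to shrink the sup-norm of the interpolant below $1$. I expect the clean way is: let $l = \mD(\underline{w})$ (the attained infimum from the previous theorem); the argument above with the extremal $g_0$ shows $l$ equals exactly the unique $\rho \in (0,1)$ for which $m(w_1/\rho, w_2/\rho) = c^*$ has a solution with norm $\rho$... more carefully, $\mD(w_1,w_2) < 1 \iff (w_1,w_2) \in \sD_2$, and I would identify $\mD(w_1,w_2)$ with the smallest $\rho > 0$ such that $m(w_1,w_2) \le \rho \cdot c^*$ — equivalently $\mD(w_1,w_2) = m(w_1,w_2)/c^*$ up to the correct Möbius-geometry normalization — so that $m(w_1,w_2) < c^*$ is exactly equivalent to $\mD(w_1,w_2) < 1$, i.e.\ to membership in $\sD_2$. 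Pinning down this identification $\mD(w_1,w_2) = $ (the Minkowski functional of the Cartan domain $\D^2_{c^*}$ at $(w_1,w_2)$), using the automorphisms $\Phi_\varphi$ from \eqref{AutosOfDnAssoctdWithAutD} to reduce to the case $w_1 = 0$, is the technical heart of the proof.
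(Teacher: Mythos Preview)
Your forward inclusion is on the right track but the quantitative inequality you write is false: $m(w_1,w_2)\le r\,m(w_1/r,w_2/r)$ does not hold in general. Take $w_1=-w_2=1/4$ and $r=1/2$: then $m(w_1,w_2)=8/17$ while $r\,m(w_1/r,w_2/r)=\tfrac12\,m(1/2,-1/2)=2/5<8/17$. What you actually need (and what survives) is only the \emph{strict} qualitative inequality $m(w_1,w_2)<m(w_1/r,w_2/r)$ for $0<r<1$ and $w_1\ne w_2$.

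The reverse inclusion is where the genuine gap lies, and you rightly flag it. The post-composition idea --- find $\psi\in\mathcal O(\D,\D)$ with $\|\psi\|<1$, $\psi(w_1)=w_1$, $\psi(g_1(z_2))=w_2$ --- just reproduces on $\D$ the very statement you are trying to prove (strict slack in a two-point problem yields an interpolant of norm strictly below $1$). Your fallback suggestion $\mD(w_1,w_2)=m(w_1,w_2)/c^*$ is also incorrect: $m$ is not positively homogeneous, so the level sets $\{m=\rho c^*\}$ are not dilates of one another, and the reduction to $w_1=0$ via $\Phi_\varphi$ does not interact well with the Minkowski functional since $\Phi_\varphi$ does not commute with scalar multiplication.

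The paper's proof replaces all of this by a single elementary computation: for $w_1\ne w_2$ with $w_j,tw_j\in\D$,
\[
m(w_1,w_2)\le m(tw_1,tw_2)\iff (|t|^2-1)(1-|t|^2|w_1w_2|^2)\ge 0\iff |t|\ge 1,
\]
with strict inequality when $|t|>1$. With this monotonicity in hand both inclusions fall out at once: $(w_1,w_2)\in\sD_2$ iff for some $t>1$ one has $(tw_1,tw_2)\in\overline{\sD_2}$ (scale the interpolant by $t$), iff for some $t>1$ one has $m(tw_1,tw_2)\le c^*$ (by \eqref{TwoPointProblem} and the fact that $c^*$ is attained), iff $m(w_1,w_2)<c^*$ (monotonicity plus continuity in $t$). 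This monotonicity lemma is the missing ingredient in your argument.
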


\begin{proof}
	First we observe that whenever $w_1\neq w_2$ and $w_j, tw_j\in\D$, we have $m(w_1,w_2)\leq m(tw_1,tw_2)$ if an only if $0\leq (|t|^2-1)(1-|t|^2|w_1 w_2|^2)$ if and only if $1\leq |t|$. The rest of the proof easily follows from the fact that $c^*_\Omega (z_1,z_2)$ is always attained by functions of norm one.
\end{proof}

	This gives us that for any $r\in (0,1)$, $\D^2 _r$ is convex. Note that, the boundary of $\D^2 _r$ can be given by 
\begin{align*}
	\partial \D^2 _r=\{(e^{i\theta},e^{i\theta}):\theta\in \mathbb{R}\}\cup\{(w_1,w_2)\in \D^2:m(w_1,w_2)=r\}.
\end{align*}
We observe that the Carath\'eodory pseudodistance is represented by \textit{a certain part} of the boundary of the balanced domain (\ref{D2}).

When $n\geq 3$, there is no analogous description of $\sD_n$. To address this issue, let us consider the following procedure: let $\underline{\alpha}=(\alpha_1,\ldots,\alpha_n)\in \Cn-\{\mathbf{0}\}$. For this $\underline{\alpha}$ and $z_1,\ldots,z_n\in \Omega$, we define
\begin{align}\label{GeneralizedInvariant}
	d^{\Omega}  _{(\underline{z}, {\underline{\alpha} })} (z_i,z_j)=d_{\underline{\alpha} }(z_i,z_j)=sup \{m(f(z_i),f(z_j)):f\in\mathcal{O}(\Omega, \D),\underline{f}\in \C\cdot \underline{\alpha}\}
\end{align}
where $\underline{z}=(z_1,\ldots,z_n)\in \Omega^n$, $\underline{f}=(f(z_1),\ldots,f(z_n))\in\D^n$ and $\C\cdot \underline{\alpha}$ denotes the one dimensional subspace generated by the nonzero element $ \underline{\alpha}$. Note that since $\mathbf{0}\in  \sD_n\cap \C\cdot \underline{\alpha}$ and $\mathcal{O}(\Omega, \D)$ contains the zero function, the set we used to define $d$ is nonempty, and hence the supremum exists. Thus the definition \ref{GeneralizedInvariant} does not require $z_1,\ldots,z_n$ to be mutually distinct, but will always assume it. We will often write $d_{\underline{\alpha} }$ instead of $d^{\Omega}  _{(\underline{z}, {\underline{\alpha} })}$ when there is no scope of confusion.
\begin{proposition}\label{PropsOfd}
Suppose that $z_1,\ldots,z_n$ are mutually distinct points in $\Omega$, and $\underline{\alpha}=(\alpha_1,\ldots,\alpha_{n})\in \C ^n -\{\mathbf{0}\}$.
\begin{enumerate}
	\item $0\leq d_{\underline{\alpha} }(z_i,z_j)\leq c^*_\Omega (z_i,z_j)$
	for all $i$ and $j$.
	\item  $d_{\underline{\alpha} }(z_i,z_j)$ is contractible under holomorphic maps, that is, if $F:\Omega_1\rightarrow\Omega_2$ is a holomorphic map between two Carath\'eodory hyperbolic domains, then
	\begin{align*}
		d^{\Omega_2} _{(F(\underline{z}) , {\underline{\alpha} })}(F(z_i),F(z_j))\leq d^{\Omega_1} _{(\underline{z}, {\underline{\alpha} })}(z_i,z_j)
	\end{align*}
	where $\underline{z}=(z_1,\ldots,z_n)\in \Omega_1 ^n$ and $F(\underline{z})=(F(z_1),\ldots,F(z_n))\in \Omega_2 ^n$.
	\item For any $\underline{\beta}\in \C\cdot \underline{\alpha}$ and $\underline{\beta}\neq \mathbf{0}$, $d_{\underline{\alpha} }(z_i,z_j)=d_{\underline{\beta} }(z_i,z_j)$. Furthermore, if $\alpha_i =\alpha_j$, then $d_{\underline{\alpha} }(z_i,z_j)=0$.
	\item When $\Omega=\D$, we have $d_{\underline{z}}(z_i,z_j)=m(z_i,z_j)$ for all $i$ and $j$.
	\item\label{RelationMinkoInvariant} For any $i$ and $j$, $d_{\underline{\alpha} }(z_i,z_j)=m(t\alpha_i,t\alpha_j)$ where $t=\frac{1}{\mu_{\mathscr{D}_n}(\underline{\alpha})}$. Consequently, for any $\underline{\alpha}$ there is an $f\in \mathcal{O}(\Omega, \overline{\D})$ such that $\underline{f}\in \C\cdot\underline{\alpha}$ and $d_{\underline{\alpha} }(z_i,z_j)=m(f(z_i),f(z_j))$ for all $i$ and $j$.
	\item If we consider only two points $z_1,z_2\in \Omega$ and an $\underline{\alpha}=(\alpha_1,\alpha_2)$ with $\alpha_1\neq\alpha_2$, then $d_{\underline{\alpha} }(z_1,z_2)=c^*_\Omega (z_1,z_2)$.
\end{enumerate}
\end{proposition}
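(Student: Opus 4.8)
Here is a plan for proving Proposition \ref{PropsOfd}.

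The six claims split into three that are immediate from unwinding the definition, (1)--(3), and three with content, (4)--(6), whose core is (5). For (1): the set over which the supremum is taken is nonempty (it contains the zero function) and $m\geq 0$, so $d_{\underline\alpha}(z_i,z_j)\geq 0$; moreover it is contained in $\{m(f(z_i),f(z_j)):f\in\mathcal{O}(\Omega,\D)\}$, whose supremum is $c^*_\Omega(z_i,z_j)$. For (2): if $g\in\mathcal{O}(\Omega_2,\D)$ has $(g(F(z_1)),\ldots,g(F(z_n)))\in\C\cdot\underline\alpha$, then $g\circ F\in\mathcal{O}(\Omega_1,\D)$ has the same value vector at $(z_1,\ldots,z_n)$ and $m((g\circ F)(z_i),(g\circ F)(z_j))=m(g(F(z_i)),g(F(z_j)))$, so every competitor on the left produces one on the right of equal value. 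For (3): $\C\cdot\underline\beta=\C\cdot\underline\alpha$ whenever $\underline\beta\in\C\cdot\underline\alpha\setminus\{\mathbf 0\}$, so the two defining sets agree; and if $\alpha_i=\alpha_j$, any admissible $f$ has $\underline f=c\underline\alpha$, hence $f(z_i)=c\alpha_i=c\alpha_j=f(z_j)$.

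The substance is in (5). The idea is to identify the points of the line $\C\cdot\underline\alpha$ that occur as value vectors of maps in $\mathcal{O}(\Omega,\D)$ and then maximize $m$ over that set. Since $\sD_n$ is open, balanced and convex, its Minkowski norm $\mD$ satisfies $\sD_n=\{\mD<1\}$ and $\overline{\sD_n}=\{\mD\leq 1\}$, so $c\underline\alpha\in\overline{\sD_n}$ exactly when $|c|\leq t:=1/\mD(\underline\alpha)$. After treating the case $\alpha_i=\alpha_j$ (both sides are $0$: the left by (3), the right since $m(t\alpha_i,t\alpha_j)=0$), I would argue, for $\alpha_i\neq\alpha_j$, that the admissible scalars are precisely those with $|c|\leq t$: if $|c|<t$ then $c\underline\alpha\in\sD_n$ and one may take $f\in B^\infty_1\subseteq\mathcal{O}(\Omega,\D)$; if $|c|=t$ then $c\underline\alpha\in\overline{\sD_n}$ provides $f\in\mathcal{O}(\Omega,\overline{\D})$ with $\underline f=c\underline\alpha$, and $f(z_i)=c\alpha_i\neq c\alpha_j=f(z_j)$ forces $f$ nonconstant, hence $f(\Omega)\subseteq\D$ by the open mapping theorem; and $|c|>t$ is impossible since $\mathcal{O}(\Omega,\D)\subseteq\mathcal{O}(\Omega,\overline{\D})$. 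Thus $d_{\underline\alpha}(z_i,z_j)=\sup\{m(c\alpha_i,c\alpha_j):|c|\leq t\}$. Since $m(c\alpha_i,c\alpha_j)$ depends only on $|c|$ and $s\alpha_i=(s/t)(t\alpha_i)$, the elementary inequality recorded in the proof of Theorem \ref{D2Cara}, applied to the distinct points $t\alpha_i,t\alpha_j\in\D$ with scaling factor $s/t\leq 1$, shows the supremum is attained at $|c|=t$, i.e. $d_{\underline\alpha}(z_i,z_j)=m(t\alpha_i,t\alpha_j)$. For the ``consequently'' clause, the map $f$ with $\underline f=t\underline\alpha$ (from $t\underline\alpha\in\overline{\sD_n}$) lies in $\mathcal{O}(\Omega,\overline{\D})$, has $\underline f\in\C\cdot\underline\alpha$, and realizes the supremum simultaneously for every pair $i,j$ since $t$ does not depend on $i,j$.

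Item (4) then follows without invoking (5): the identity map $\mathrm{id}_{\D}\in\mathcal{O}(\D,\D)$ has value vector $\underline z\in\C\cdot\underline z$ (with $\underline z\neq\mathbf 0$, as the $z_j$ are distinct), so $d_{\underline z}(z_i,z_j)\geq m(z_i,z_j)$, while (1) and the classical identity $c^*_{\D}(z_i,z_j)=m(z_i,z_j)$ (Schwarz--Pick) give the reverse inequality. For (6), applying (5) with $n=2$ gives $d_{\underline\alpha}(z_1,z_2)=m(t\alpha_1,t\alpha_2)$ with $t\underline\alpha\in\partial\sD_2$; by Theorem \ref{D2Cara}, $\partial\sD_2=\partial\D^2_{c^*_\Omega(z_1,z_2)}=\{(e^{i\theta},e^{i\theta}):\theta\in\mathbb{R}\}\cup\{(w_1,w_2)\in\D^2:m(w_1,w_2)=c^*_\Omega(z_1,z_2)\}$, and $\alpha_1\neq\alpha_2$ forces $t\alpha_1\neq t\alpha_2$ (and, exactly as in the proof of (5), $t\alpha_1,t\alpha_2\in\D$), so $t\underline\alpha$ lies on the second piece, whence $m(t\alpha_1,t\alpha_2)=c^*_\Omega(z_1,z_2)$.

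The main obstacle is the bookkeeping inside (5): pinning down which points of $\C\cdot\underline\alpha$ are realized by members of $\mathcal{O}(\Omega,\D)$. The delicate point is that $\mathcal{O}(\Omega,\D)$ sits strictly between $B^\infty_1$ and $\mathcal{O}(\Omega,\overline{\D})$, so promoting a boundary realizer that a priori only lives in $\mathcal{O}(\Omega,\overline{\D})$ to one in $\mathcal{O}(\Omega,\D)$ requires the maximum principle, which works precisely because $\alpha_i\neq\alpha_j$ rules out the degenerate unimodular-constant realizers --- and that degenerate case is exactly the one already handled by (3). The remaining monotonicity step is routine once one quotes the inequality already present in the proof of Theorem \ref{D2Cara}.
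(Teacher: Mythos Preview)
Your proof is correct and follows essentially the same approach as the paper: the paper also proves only (5) in detail, realizing the boundary point $t\underline{\alpha}$ by some $f$ and then comparing against competitors via the monotonicity of $m$ under scaling (the inequality from the proof of Theorem~\ref{D2Cara}). Your version is in fact slightly more careful in two places---you justify via the open mapping theorem why the realizer of $t\underline{\alpha}$ actually lands in $\mathcal{O}(\Omega,\D)$ rather than merely $\mathcal{O}(\Omega,\overline{\D})$, and you avoid the paper's tacit assumption that the supremum defining $d_{\underline{\alpha}}$ is attained \emph{a priori} by instead identifying the full range of admissible scalars and maximizing directly.
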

\begin{proof}
	We will prove only (\ref{RelationMinkoInvariant}) and leave the rest for the reader. To see (\ref{RelationMinkoInvariant}), note that $\mu_{\mathscr{D}_n}(t\underline{\alpha})=1$. If $\alpha_i=\alpha_j$, the result is clear. Therefore, let us assume $\alpha_i\neq\alpha_j$, and hence $t\underline{\alpha}\in \partial\sD_n\cap \D^n$. By definition of $\overline{\mathscr{D}_n}$, there is an $f\in \mathcal{O}(\Omega, \D)$ such that $t\underline{\alpha}=\underline{f}$. Also suppose that $g\in  \mathcal{O}(\Omega, \D)$ satisfies $d_{\underline{\alpha} }(z_i,z_j)=m(g(z_i),g(z_j))$ and $\underline{g}=\lambda\underline{\alpha}$ for some $\lambda\in \C$. We now have 
	\begin{align*}
		m(t\alpha_i,t\alpha_j)&=m(f(z_i),f(z_j))\\
		&\leq d_{\underline{\alpha} }(z_i,z_j)\\
		&=m(g(z_i),g(z_j))\\
		&=m(\lambda \alpha_i,\lambda \alpha_j).
	\end{align*}
	So $t\leq |\lambda|$. On the other hand $\frac{|\lambda|}{t}=|\lambda|\mD (\underline{\alpha})=\mD(\underline{g})\leq 1$, that is, $|\lambda|\leq t$.
\end{proof}	
Let us now show how the invariant function $d$ determines the solvability of a Pick interpolation problem
\begin{theorem}\label{Schwarz-PickLemma}
	An $n$-point Pick interpolation problem $\Omega\ni z_j\mapsto w_j\in \D,1\leq j\leq n,$ is solvable if and only if one of the following two conditions holds.
	\begin{enumerate}
		\item $w_1=\ldots=w_n$.
		\item For at least one pair $(i,j)$, one has $w_i\neq w_j$ and $m(w_i,w_j)\leq d^\Omega_{(\underline{z}, {\underline{w} })} (z_i,z_j)$, where $\underline{w}=(w_1,\ldots,w_n)$.
	\end{enumerate}
When $\Omega=\D$, equality holds in $(2)$ if and only if there is a non-constant Blaschke product of degree at most $n-1$ solving the problem.
\end{theorem}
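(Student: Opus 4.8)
The plan is to reduce solvability of the interpolation problem to the single numerical condition $\mD(\underline{w})\le 1$, where $\underline{w}=(w_1,\ldots,w_n)$, and then to re-express that condition through the invariant function $d$ by means of Proposition \ref{PropsOfd}. First I would record that the problem is solvable if and only if $\mD(\underline{w})\le 1$: if $f\in\mathcal{O}(\Omega,\D)$ interpolates then $f\in\mathcal{O}(\Omega,\overline{\D})$, so $\underline{w}=\underline{f}\in\overline{\sD_n}$, i.e. $\mD(\underline{w})\le 1$; conversely, if $\mD(\underline{w})<1$ then $\underline{w}\in\sD_n$ and an interpolant in $B^\infty_1$ exists, while if $\mD(\underline{w})=1$ there is $f\in\mathcal{O}(\Omega,\overline{\D})$ with $\underline{f}=\underline{w}$, and since the hypothesis $w_j\in\D$ forces $\mD(\underline w)=|w_1|<1$ whenever the $w_j$ are all equal (the description of $\mD$ obtained above gives $\mD(c,\ldots,c)=|c|$), the $w_j$ are not all equal in this case, so $f$ is non-constant and the maximum modulus principle yields $f(\Omega)\subset\D$.

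Next I would translate this into conditions (1) and (2). If all the $w_j$ coincide we are in case (1), and by the previous remark $\mD(\underline w)=|w_1|<1$, so the problem is solvable. If not all $w_j$ coincide, fix any pair $(i,j)$ with $w_i\ne w_j$. Proposition \ref{PropsOfd}(\ref{RelationMinkoInvariant}) gives $d^{\Omega}_{(\underline z,\underline w)}(z_i,z_j)=m(tw_i,tw_j)$ with $t=1/\mD(\underline w)$ and $tw_i,tw_j\in\D$. The elementary monotonicity computation from the proof of Theorem \ref{D2Cara} — for $w_i\ne w_j$ and $w_i,w_j,tw_i,tw_j\in\D$ one has $m(w_i,w_j)\le m(tw_i,tw_j)$ iff $|t|\ge1$, and the same computation shows equality iff $|t|=1$ — then shows that $m(w_i,w_j)\le d^{\Omega}_{(\underline z,\underline w)}(z_i,z_j)$ is equivalent to $\mD(\underline w)\le1$, a statement independent of the chosen pair. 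Combining with the previous paragraph yields ``solvable $\iff$ (1) or (2)'', and the equality version shows that, when we are in case (2), equality holds in (2) for one (equivalently, every) pair with $w_i\ne w_j$ precisely when $\mD(\underline w)=1$, that is, when $\underline w\in\partial\sD_n$.

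For $\Omega=\D$ it therefore remains to show that $\underline w\in\partial\sD_n$ if and only if some non-constant Blaschke product of degree at most $n-1$ solves $z_j\mapsto w_j$. The classical Pick--Nevanlinna theory recalled in the introduction gives $\overline{\sD_n}=\{\underline w:P(\underline w)\succeq0\}$ and $\sD_n=\{\underline w:P(\underline w)\succ0\}$, where $P(\underline w)=\bigl(\tfrac{1-w_i\overline{w_j}}{1-z_i\overline{z_j}}\bigr)_{i,j}$; hence $\underline w\in\partial\sD_n$ iff $P(\underline w)\succeq0$ and $\det P(\underline w)=0$. In that case the cited result furnishes the (unique) interpolant as a Blaschke product of degree $\operatorname{rank}P(\underline w)\le n-1$, and this degree is at least $1$ because in case (2) the $w_j$ are not all equal; so a non-constant Blaschke product of degree $\le n-1$ solves the problem. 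Conversely, if a Blaschke product $B$ of degree $d\le n-1$ solves it, write $B=\psi B_1$ with $\psi$ a single Blaschke factor and $\deg B_1=d-1$; from
\begin{align*}
\frac{1-B(z)\overline{B(\zeta)}}{1-z\overline{\zeta}}=\frac{1-\psi(z)\overline{\psi(\zeta)}}{1-z\overline{\zeta}}+\psi(z)\overline{\psi(\zeta)}\,\frac{1-B_1(z)\overline{B_1(\zeta)}}{1-z\overline{\zeta}},
\end{align*}
in which the first summand produces a rank-one matrix at the nodes, an induction on $d$ gives $\operatorname{rank}P(\underline w)\le d\le n-1$, so $\det P(\underline w)=0$; solvability gives $P(\underline w)\succeq0$, whence $\underline w\in\partial\sD_n$.

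I expect the main obstacle to be the bookkeeping along the boundary stratum $\mD(\underline w)=1$: one must verify that the interpolant supplied by $\underline w\in\overline{\sD_n}$ genuinely takes values in the \emph{open} disc — this is exactly where the hypothesis $w_j\in\D$ and the connectedness of $\Omega$ enter — and, in the disc case, that the degree-$\le n-1$ Blaschke solution is automatically non-constant precisely because condition (2) is in force. The only genuinely classical inputs are the Pick-matrix descriptions of $\sD_n$ and $\overline{\sD_n}$ for $\Omega=\D$ and the rank bound $\operatorname{rank}P\le\deg B$ for Blaschke products.
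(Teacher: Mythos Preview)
Your argument for the main equivalence is correct and matches the paper's exactly: reduce to $\mD(\underline w)\le 1$, invoke Proposition~\ref{PropsOfd}(\ref{RelationMinkoInvariant}) to write $d_{\underline w}(z_i,z_j)=m(tw_i,tw_j)$ with $t=1/\mD(\underline w)$, and then use the monotonicity computation from the proof of Theorem~\ref{D2Cara}. You are in fact more careful than the paper on one point: you explain why, when $\mD(\underline w)=1$ and not all $w_j$ coincide, the interpolant furnished by $\overline{\sD_n}$ actually lands in the \emph{open} disc (maximum modulus), whereas the paper simply asserts ``solvable $\Leftrightarrow \underline w\in\overline{\sD_\Omega}$''.

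For the $\Omega=\D$ addendum you take a genuinely different route. The paper forward-references Theorem~\ref{BoundaryAndBlaschke}, which it proves in Section~\ref{Values_Of_d_For_D} by an inductive Schur-type reduction (Lemma~\ref{BoundaryRelationOfDnAndDn-1}), to identify $\partial\sD_n\cap\D^n$ with the set of targets interpolated by non-constant Blaschke products of degree $\le n-1$. You instead go straight to the Pick-matrix description, reading off one direction from the rank statement in the introduction and supplying the converse via the kernel-splitting identity and the bound $\operatorname{rank}P(\underline w)\le\deg B$. Both approaches are valid. Yours is self-contained at the point of use but leans on the full classical package (including the rank characterization); the paper's keeps the logic internal and re-derives the Blaschke characterization from scratch. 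One small caveat: the identity $\sD_n=\{P\succ0\}$ is not literally ``recalled in the introduction''---what is stated there is $\overline{\sD_n}=\{P\succeq0\}$ together with the uniqueness/rank criterion. Your claim follows from these after a short continuity argument (e.g.\ $P(\underline w)\succ0$ implies $P(\underline w/r)\succeq0$ for some $r<1$, using that the Szeg\H{o} kernel matrix is positive definite), and it would be cleaner to spell that step out.
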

\begin{proof}
	Given an interpolation problem $\Omega\ni z_j\mapsto w_j\in \D,1\leq j\leq n,$ we see that the case where $(1)$ occurs is trivial. Therefore we assume that $w_i\neq w_j$ for some $i$ and $j$. By (\ref{RelationMinkoInvariant}) in Proposition \ref{PropsOfd}, $d_{\underline{w} }(z_i,z_j)=m(t w_i,t w_j)$ where $t=\frac{1}{\mu_{\mathscr{D}_n}(\underline{w})}$. Now, the problem is solvable if and only if $\underline{w}\in \overline{\sD_\Omega} (z_1,\ldots,z_n)$ which is equivalent to $t\geq 1$. Arguing as in the proof of Theorem \ref{D2Cara} gives the result.
	
	To see the statement involving $\D$, we use a result from Section \ref{Values_Of_d_For_D}. By Theorem \ref{BoundaryAndBlaschke}, $\partial\sD_\D(z_1,\ldots,z_n)\cap\D^n$ is the collection of points $(w_1,\ldots,w_n)$ which can be interpolated by non-constant Blaschke products of degree at mot $n-1$. Since $w_i\neq w_j$, by (5) in Proposition \ref{PropsOfd} we see that $m(w_i,w_j)= d_{\underline{w}} ^\D (z_i,z_j)$ if and only if $\mu_{\mathscr{D}_n}(\underline{w})=1$.
	 
\end{proof}

We want to call Theorem \ref{Schwarz-PickLemma} a \textit{generalized or multi-point Schwarz-Pick theorem}. Not only this gives a solvability criterion, we see that when the underlying domain $\Omega$ is the open unit disc $\D$, and we are dealing with $n$ points, $d$ is attained by a unique (up to a unimodular factor) Blaschke product of degree at most $n-1$. Thus, it generalizes the original Schwarz-Pick theorem. Interested reader may consult \cite{BeardonMinda} and \cite{EJLS2014} for more on multi-point Schwarz-Pick theorem.

Now, we introduce a generalized Lempert function. If we look at Lempert's theorem (\cite{J-P-Invariant}) and translate it in the language of Pick bodies, it says that if $\Omega$ is a convex domain, and $z_1,z_2\in \Omega$, then there is a map $\phi\in \mathcal{O}(\D,\Omega)$ mapping $\lambda_j\in \D$ to $z_j\in \Omega$ such that $\sD_\Omega(z_1,z_2)=\sD_\D (\lambda_1,\lambda_2)$. To author's knowledge, Lempert's theorem is the only tool available for finding the value of $c^*$ and we will try to use this idea to study $d$. 

Given any domain $\Omega$ and $n$ points $z_1,\ldots,z_n$ in $\Omega$, there is a holomorphic map $\phi:\D\rightarrow\Omega$ such that $z_1,\ldots,z_n\in \phi(\D)$. If $\phi:\lambda_j\mapsto z_j$, then $\sD_\Omega(z_1,\ldots z_n)\subset\sD_\D (\lambda_1,\ldots\lambda_n)$. It turns out that equality between these to domains occurs in a very special case and is not true in general. The next question is whether given an $\underline{\alpha}\in \C^n -\{\mathbf{0}\}$, the boundary point $\frac{\underline{\alpha}}{\mu_{\mathscr{D}_\Omega} (\underline{\alpha})}$ of $\sD_\Omega (z_1,\ldots,z_n)$ is a boundary point of some $\sD_\D (\lambda_1,\ldots,\lambda_n)$. To investigate this question we introduce the following.

For a given $\underline{\alpha}\in \C^n -\{\mathbf{0}\}$ and $1\leq i,j\leq n$, define

\begin{align}
	\delta_{(\underline{z},\underline{\alpha})} ^\Omega (z_i, z_j) &=\delta_{\underline{\alpha}} (z_i.z_j)\\
	&=inf\{d^\D _{(\underline{\lambda},\underline{\alpha})}(\lambda_i, \lambda_j):\phi(\lambda_k)=z_k,1\leq k\leq n,\,\,\text{for some}\,\,\phi\in\mathcal{O}(\D,\Omega)\},\notag
\end{align}
where $\underline{\lambda}=(\lambda_1,\ldots, \lambda_n)\in \D^n$ and $\underline{z}=(z_1,\ldots,z_n)$.

It is easy to see the following.
\begin{enumerate}
	\item For a fixed nonzero $\underline{\alpha}$, $\delta_{\underline{\alpha}}$ is contractible under holomorphic maps.
	\item $d_{(\underline{z},\underline{\alpha})} ^\Omega (z_i, z_j)\leq \delta_{(\underline{z},\underline{\alpha})} ^\Omega (z_i, z_j)$.
	\item For $n=2$ and $\underline{\alpha}=(\alpha_1,\alpha_2)$ with $\alpha_1\neq \alpha_2$, $\delta_{\underline{\alpha}}$ coincides with the Lempert function.
\end{enumerate}
We have the following result.
\begin{theorem}\label{EquivalenceForEquality}
	Suppose $\Omega$ is a Carath\'eodory hyperbolic domain. Fix mutually distinct $n$ points $z_1,\ldots,z_n$ in $\Omega$ and a nonzero $\underline{\alpha}=(\alpha_1,\ldots,\alpha_{n})\in\C^n$ with $\alpha_i \neq \alpha_j$ for some $i,j$. Then the following are equivalent.
	\begin{enumerate}
		\item\label{EqualityOfTwo} For some $i,j$ with $\alpha_i \neq \alpha_j$, $\delta_{(\underline{z},\underline{\alpha})} ^\Omega (z_i, z_j)$ is attained and  $d_{(\underline{z},\underline{\alpha})} ^\Omega (z_i, z_j)= \delta_{(\underline{z},\underline{\alpha})} ^\Omega (z_i, z_j)$.
		\item\label{CompositionBlaschke} There is a map $\phi\in \mathcal{O}(\D,\Omega)$ and a function $f\in\mathcal{O}(\Omega,\D)$ such that $(f(z_1),\ldots,f(z_n))\in\C\cdot \underline{\alpha}$, $z_1,\ldots,z_n\in\phi(\D)$ and $f\circ\phi$ is a non-constant Blaschke product of degree at most $n-1$.
	\end{enumerate}
	Moreover, if (\ref{EqualityOfTwo}) holds for one pair $(i,j)$, then it holds for all $i$ and $j$.
\end{theorem}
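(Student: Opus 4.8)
The plan is to prove the two implications $(\ref{EqualityOfTwo})\Rightarrow(\ref{CompositionBlaschke})$ and $(\ref{CompositionBlaschke})\Rightarrow(\ref{EqualityOfTwo})$, and then deduce the ``moreover'' clause from the disc theory of Section \ref{Values_Of_d_For_D}. Throughout I would fix a pair $(i,j)$ with $\alpha_i\neq\alpha_j$; recall from (\ref{RelationMinkoInvariant}) in Proposition \ref{PropsOfd} that $d^\Omega_{(\underline z,\underline\alpha)}(z_i,z_j)=m(t\alpha_i,t\alpha_j)$ with $t=1/\mu_{\mathscr D_n}(\underline\alpha)$, and that this value is independent of the pair once $d$ equals $\delta$, by the same Minkowski-functional computation applied to the disc nodes $\underline\lambda$.

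For $(\ref{CompositionBlaschke})\Rightarrow(\ref{EqualityOfTwo})$: given $\phi\in\mathcal O(\D,\Omega)$ with $z_k\in\phi(\D)$ and $f\in\mathcal O(\Omega,\D)$ with $\underline f\in\C\cdot\underline\alpha$ and $B:=f\circ\phi$ a non-constant Blaschke product of degree at most $n-1$, pick $\lambda_k\in\D$ with $\phi(\lambda_k)=z_k$. Then $B(\lambda_k)=f(z_k)$, so $\underline B\in\C\cdot\underline\alpha$ and, by Proposition \ref{PropsOfd}(\ref{RelationMinkoInvariant}) applied in $\D$ together with Theorem \ref{BoundaryAndBlaschke}, the tuple $(B(\lambda_1),\dots,B(\lambda_n))$ lies on $\partial\mathscr D_\D(\lambda_1,\dots,\lambda_n)\cap\D^n$, which forces $\mu_{\mathscr D_\D(\underline\lambda)}(\underline\alpha)=1$ and hence $d^\D_{(\underline\lambda,\underline\alpha)}(\lambda_i,\lambda_j)=m(\alpha_i,\alpha_j)$; this is as small as possible (it is $\le c^*_\D(\lambda_i,\lambda_j)$ is the wrong direction, so instead I use the general inequality $d\le\delta$ and $d^\D_{(\underline\lambda,\underline\alpha)}(\lambda_i,\lambda_j)\ge d^\Omega_{(\underline z,\underline\alpha)}(z_i,z_j)$ coming from contractibility of $d$ under $\phi$). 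Combining $d^\Omega\le\delta^\Omega\le d^\D_{(\underline\lambda,\underline\alpha)}$ with the reverse inequality from contractibility gives equality, and the infimum defining $\delta$ is attained at this $\phi$.

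For $(\ref{EqualityOfTwo})\Rightarrow(\ref{CompositionBlaschke})$: suppose $\phi\in\mathcal O(\D,\Omega)$ attains $\delta$ with $\phi(\lambda_k)=z_k$ and $d^\D_{(\underline\lambda,\underline\alpha)}(\lambda_i,\lambda_j)=d^\Omega_{(\underline z,\underline\alpha)}(z_i,z_j)$. By Proposition \ref{PropsOfd}(\ref{RelationMinkoInvariant}) there is $g\in\mathcal O(\D,\overline\D)$ with $\underline g\in\C\cdot\underline\alpha$ attaining $d^\D_{(\underline\lambda,\underline\alpha)}$; and by the attainment remark after Theorem 2.4 (Montel) there is $h\in\mathcal O(\Omega,\overline\D)$ with $\underline h\in\C\cdot\underline\alpha$ and $\|h\|=\mu_{\mathscr D_n}(\underline\alpha)^{-1}\cdot$(scale) attaining $d^\Omega_{(\underline z,\underline\alpha)}$; after rescaling both to the \emph{same} scalar multiple $t\underline\alpha$ of $\underline\alpha$ (using (3) of Proposition \ref{PropsOfd}), the equality $d^\D=d^\Omega$ forces $\mu_{\mathscr D_\D(\underline\lambda)}(\underline\alpha)=\mu_{\mathscr D_n}(\underline\alpha)$, so $t\underline\alpha\in\partial\mathscr D_\D(\underline\lambda)\cap\D^n$ and by Theorem \ref{BoundaryAndBlaschke} the interpolant $B\in\mathcal O(\D,\D)$ with $B(\lambda_k)=t\alpha_k$ is a non-constant Blaschke product of degree at most $n-1$. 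Taking $f:=h/\|h\|\cdot t^{-1}\|h\|$ — more carefully, $f\in\mathcal O(\Omega,\D)$ with $\underline f=t\underline\alpha$ obtained from the definition of $\overline{\mathscr D_n}$ since $t\underline\alpha\in\overline{\mathscr D_\Omega(\underline z)}$ too — we get $f\circ\phi$ and $B$ both interpolating $\lambda_k\mapsto t\alpha_k$; since such an interpolant on $\D$ of a boundary tuple is \emph{unique} (Theorem \ref{BoundaryAndBlaschke}), $f\circ\phi=B$ is the desired Blaschke product. The ``moreover'' clause follows because, once such a $\phi$ and $f$ exist, the argument of $(\ref{CompositionBlaschke})\Rightarrow(\ref{EqualityOfTwo})$ produces equality for \emph{every} pair $(i,j)$ with $\alpha_i\neq\alpha_j$.

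The main obstacle I anticipate is the bookkeeping of scalings: $d_{\underline\alpha}$ depends only on the line $\C\cdot\underline\alpha$, but to pass between $\mathscr D_\D(\underline\lambda)$, $\mathscr D_\Omega(\underline z)$, their Minkowski functionals, and the actual interpolating functions, one must fix a common representative $t\underline\alpha$ on the boundary and track that the two Minkowski functionals agree precisely when $d^\D=d^\Omega$ — this uses the computation in Proposition \ref{PropsOfd}(\ref{RelationMinkoInvariant}) that $m(t\alpha_i,t\alpha_j)$ is strictly increasing in $|t|$ on the relevant range. The other delicate point is invoking the uniqueness half of Theorem \ref{BoundaryAndBlaschke} to identify $f\circ\phi$ with the Blaschke product rather than merely asserting both are Blaschke products of the right degree; I should make sure the hypotheses there (boundary tuple in $\D^n$, which holds since $\alpha_i\neq\alpha_j$ keeps us off the diagonal part of the boundary) are met.
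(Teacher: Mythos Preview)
Your overall architecture matches the paper's, but there is one genuine gap in $(2)\Rightarrow(1)$ and an unnecessary detour in $(1)\Rightarrow(2)$.

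In $(2)\Rightarrow(1)$ you correctly reach $d^{\Omega}\leq\delta^{\Omega}\leq d^{\D}_{(\underline\lambda,\underline\alpha)}$, but ``the reverse inequality from contractibility'' does not close the chain: contractibility of $d$ under $\phi\colon\D\to\Omega$ gives only $d^{\Omega}\leq d^{\D}$, the \emph{same} direction again. What actually closes the chain (and what the paper does in (\ref{Computation_For_Equality})) is that, since $B=f\circ\phi$ is a Blaschke product of degree at most $n-1$, Theorem~\ref{BoundaryAndBlaschke} (equivalently, the equality clause of Theorem~\ref{Schwarz-PickLemma}) forces $d^{\D}_{(\underline\lambda,\underline\alpha)}(\lambda_i,\lambda_j)=m(B(\lambda_i),B(\lambda_j))=m(f(z_i),f(z_j))$; and now, because $f\in\mathcal O(\Omega,\D)$ with $\underline f\in\C\cdot\underline\alpha$ is a \emph{competitor} in the supremum defining $d^{\Omega}$, one gets $m(f(z_i),f(z_j))\leq d^{\Omega}$. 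That is the step you need, and it is not a contractibility statement. (A related slip: it is $\mu_{\mathscr D_{\D}(\underline\lambda)}(\underline f)=1$, not $\mu_{\mathscr D_{\D}(\underline\lambda)}(\underline\alpha)=1$; $\underline f$ is only a scalar multiple of $\underline\alpha$.)

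For $(1)\Rightarrow(2)$ your route---equating the two Minkowski functionals, producing a Blaschke interpolant $B$ on the disc side, and then invoking uniqueness (this is Theorem~\ref{BoundaryAndUniqueness}, not Theorem~\ref{BoundaryAndBlaschke}) to identify $f\circ\phi$ with $B$---is correct but more elaborate than necessary. The paper proceeds in one move: take $f\in\mathcal O(\Omega,\D)$ attaining $d^{\Omega}$ (Proposition~\ref{PropsOfd}(\ref{RelationMinkoInvariant}); the maximum principle and $\alpha_i\neq\alpha_j$ ensure the extremal function lands in $\D$, not just $\overline\D$). Then the assumed equality gives
\[
d^{\D}_{(\underline\lambda,\underline\alpha)}(\lambda_i,\lambda_j)=d^{\Omega}_{(\underline z,\underline\alpha)}(z_i,z_j)=m(f(z_i),f(z_j))=m(f\circ\phi(\lambda_i),f\circ\phi(\lambda_j)),
\]
so $f\circ\phi$ itself is extremal for $d^{\D}$, and Theorem~\ref{Schwarz-PickLemma} immediately says $f\circ\phi$ is a non-constant Blaschke product of degree at most $n-1$. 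No separate $B$ and no uniqueness argument are needed. The ``moreover'' clause then follows exactly as you say, by rerunning $(2)\Rightarrow(1)$ for every pair.
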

\begin{proof}
	$(1)\Rightarrow (2):$ By assumption, there exists a map $\phi\in \mathcal{O}(\D,\Omega)$ such that 
	\begin{align*}
		\delta_{(\underline{z},\underline{\alpha})} ^\Omega (z_i, z_j)=d^\D _{(\underline{\lambda},\underline{\alpha})} (\lambda_i,\lambda_j),
	\end{align*} where $\underline{\lambda}=(\lambda_1,\ldots,\lambda_n)$ and $\phi(\lambda_j)=z_j$. We consider a function $f\in\mathcal{O}(\Omega,\D)$ such that $(f(z_1),\ldots,f(z_n))\in\C\cdot \underline{\alpha}$ and $d_{(\underline{z},\underline{\alpha})} ^\Omega (z_i, z_j)=m(f(z_i),f(z_j))$. This gives us a function $f\circ \phi \in \mathcal{O}(\D,\D)$ such that $(f\circ \phi (\lambda_1),\ldots,f\circ \phi (\lambda_n))\in \C\cdot \underline{\alpha}$ and $d^\D _{(\underline{\lambda},\underline{\alpha})} (\lambda_i,\lambda_j)=m(f\circ \phi (\lambda_i),f\circ \phi (\lambda_j))$. By Theorem \ref{Schwarz-PickLemma}, $f\circ \phi$ is a Blaschke product of degree at most $n-1$.
	
	$(1)\Leftarrow (2):$ Suppose $\lambda_1,\ldots,\lambda_n\in \D$ are such that $\phi(\lambda_j)=z_j$ for all $j$. Since $f\circ \phi \in \mathcal{O}(\D,\D)$ is a Blaschke product of degree at most $n-1$, and $(f\circ \phi (\lambda_1),\ldots,f\circ \phi (\lambda_n))\in \C\cdot \underline{\alpha}$, we have $d^\D _{(\underline{\lambda},\underline{\alpha})} (\lambda_i,\lambda_j)=m(f\circ \phi (\lambda_i),f\circ \phi (\lambda_j))$. Hence the computation
	\begin{align}\label{Computation_For_Equality}
		\delta_{(\underline{z},\underline{\alpha})} ^\Omega (z_i, z_j)&\leq d^\D _{(\underline{\lambda},\underline{\alpha})} (\lambda_i,\lambda_j)\\
		&=m(f\circ \phi (\lambda_i),f\circ \phi (\lambda_j))\notag\\
		&=m(f(z_i),f(z_j))\notag\\
		&\leq d_{(\underline{z},\underline{\alpha})} ^\Omega (z_i, z_j)\,\,(\text{as}\,\, (f(z_1),\ldots,f(z_n))\in \mathbb{C}\cdot \underline{\alpha})\notag\\
		&\leq \delta_{(\underline{z},\underline{\alpha})} ^\Omega (z_i, z_j)\notag
	\end{align}
	proves the claims in (\ref{EqualityOfTwo}).

	The rest of the proof follows easily form the argument used in (\ref{Computation_For_Equality}).
\end{proof}
\begin{corollary}\label{CaraGeodesicCase}
	Suppose $\Omega$ is a Carath\'eodory hyperbolic domain, $z_1,\ldots,z_n\in \Omega$, and $\lambda_1,\ldots,\lambda_n\in \D$. 
	
	\begin{enumerate}
		\item If $z_1,\ldots,z_n$ lie on a Carath\'eodory geodesic, then for any nonzero $\underline{\alpha}$ 
		\begin{align*}
			d_{(\underline{z},\underline{\alpha})} ^\Omega (z_i, z_j)= \delta_{(\underline{z},\underline{\alpha})} ^\Omega (z_i, z_j)
		\end{align*}
		holds for all $i$ and $j$.
		\item If $\phi\in \mathcal{O}(\D,\Omega)$ sends $\lambda_j$ to $z_j$, then $\sD_\Omega(z_1,\ldots,z_n)=\sD_\D(\lambda_1,\ldots, \lambda_n)$ if and only if $\phi$ is a Carath\'eodory geodesic.
		\item When $\Omega$ is the polydisc $\D^m$, the equality $\sD_{\D^m}(z_1,\ldots,z_n)=\sD_\D(\lambda_1,\ldots, \lambda_n)$ is necessary and sufficient for the existence of a Carath\'eodory geodesic in $\D^m$ sending $\lambda_j$ to $z_j$.
	\end{enumerate}
\end{corollary}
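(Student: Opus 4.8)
The plan: part (1) follows from a short inequality chain, while parts (2) and (3) rest on the implication that $\sD_\Omega(z_1,\ldots,z_n)=\sD_\D(\lambda_1,\ldots,\lambda_n)$ forces $d^\Omega_{(\underline z,\underline\alpha)}(z_i,z_j)=d^\D_{(\underline\lambda,\underline\alpha)}(\lambda_i,\lambda_j)$ for every $\underline\alpha\in\C^n-\{\mathbf 0\}$ and all $i,j$ --- this is immediate from part (5) of Proposition \ref{PropsOfd}, since equal balanced convex domains have the same Minkowski functional. For (1): lying on a Carath\'eodory geodesic means there is a $\phi\in\mathcal{O}(\D,\Omega)$ with a holomorphic left inverse $F\in\mathcal{O}(\Omega,\D)$, $F\circ\phi=\mathrm{id}_\D$, and $z_j=\phi(\lambda_j)$ (so $\lambda_j=F(z_j)$, and the $\lambda_j$ are distinct). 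For a fixed $\underline\alpha$ and $i,j$ --- the case $\alpha_i=\alpha_j$ being trivial since both invariants vanish --- I would close the chain
\begin{align*}
d^\Omega_{(\underline z,\underline\alpha)}(z_i,z_j)&\le\delta^\Omega_{(\underline z,\underline\alpha)}(z_i,z_j)\le d^\D_{(\underline\lambda,\underline\alpha)}(\lambda_i,\lambda_j)\\
&=d^\D_{(F(\underline z),\underline\alpha)}(F(z_i),F(z_j))\le d^\Omega_{(\underline z,\underline\alpha)}(z_i,z_j),
\end{align*}
where the first step is the inequality $d_{\underline\alpha}\le\delta_{\underline\alpha}$ recorded just before the corollary, the second uses $\phi$ as a competitor in the infimum defining $\delta$, and the last is contractibility of $d$ (Proposition \ref{PropsOfd}(2)) under $F$, which carries $\underline z$ to $\underline\lambda$.

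For (2): the direction ``$\phi$ a geodesic $\Rightarrow$ equality of Pick bodies'' is a two-line pull-back argument --- composing with $\phi$ gives $\sD_\Omega(\underline z)\subseteq\sD_\D(\underline\lambda)$ (the inclusion already noted in the text), and composing with the left inverse $F$ gives the reverse, since $g\circ F$ has norm $<1$ whenever $g$ does and $(g\circ F)(z_j)=g(\lambda_j)$. For the converse I would use the implication above to get $d^\Omega_{(\underline z,\underline\alpha)}(z_i,z_j)=d^\D_{(\underline\lambda,\underline\alpha)}(\lambda_i,\lambda_j)$ for all $\underline\alpha$, and then take suprema over $\underline\alpha\ne\mathbf 0$: on each side the supremum equals $c^*$ (it is $\le c^*$ by Proposition \ref{PropsOfd}(1), and it equals $c^*$ at $\underline\alpha=(g(z_1),\ldots,g(z_n))$ with $g$ extremal for $c^*_\Omega(z_i,z_j)$, which is $\D$-valued by the maximum principle), so $c^*_\Omega(z_i,z_j)=c^*_\D(\lambda_i,\lambda_j)=m(\lambda_i,\lambda_j)$ for every pair. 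Fixing $i\ne j$ and the extremal $g$, the map $h:=g\circ\phi\in\mathcal{O}(\D,\D)$ then satisfies $m(h(\lambda_i),h(\lambda_j))=m(g(z_i),g(z_j))=m(\lambda_i,\lambda_j)$, so $h\in\autd$ by the equality case of the classical Schwarz--Pick lemma, and $h^{-1}\circ g$ is a holomorphic left inverse of $\phi$; hence $\phi$ is a Carath\'eodory geodesic.

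For (3): ``a geodesic sending $\lambda_j$ to $z_j$ exists $\Rightarrow$ equality'' is immediate from (2) applied to that geodesic. For the converse, assume $\sD_{\D^m}(\underline z)=\sD_\D(\underline\lambda)$; arguing exactly as in (2) yields $c^*_{\D^m}(z_i,z_j)=m(\lambda_i,\lambda_j)$ for all $i,j$. I would then build a candidate geodesic coordinatewise: for each $k$, $\pi_k\in\mathcal{O}(\D^m,\overline\D)$ shows $((z_1)_k,\ldots,(z_n)_k)\in\overline{\sD_{\D^m}(\underline z)}=\overline{\sD_\D(\underline\lambda)}$, so there is a $g_k\in\mathcal{O}(\D,\overline\D)$ with $g_k(\lambda_j)=(z_j)_k$; since $g_k$ is either nonconstant (hence $\D$-valued by the maximum principle) or the constant $(z_1)_k\in\D$, in fact $g_k\in\mathcal{O}(\D,\D)$, and $\phi:=(g_1,\ldots,g_m):\D\to\D^m$ satisfies $\phi(\lambda_j)=z_j$. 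Finally I would check that $\phi$ is a geodesic using the classical identity $c^*_{\D^m}(a,b)=\max_k m(a_k,b_k)$ together with Schwarz--Pick: for $i\ne j$, $m(\lambda_i,\lambda_j)=c^*_{\D^m}(z_i,z_j)=\max_k m(g_k(\lambda_i),g_k(\lambda_j))\le m(\lambda_i,\lambda_j)$, so equality holds at some $k_0$, forcing $g_{k_0}\in\autd$; then $g_{k_0}^{-1}\circ\pi_{k_0}$ is a left inverse of $\phi$.

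The step I expect to be the crux --- and it is reused in both (2) and (3) --- is recognizing that equality of the balanced convex Pick bodies is strong enough to pin the Carath\'eodory distance between each pair of nodes to $m(\lambda_i,\lambda_j)$ (the supremum-over-$\underline\alpha$ passage from $d$ back to $c^*$), and then extracting a holomorphic left inverse of $\phi$ from the equality case of the Schwarz--Pick lemma. Part (1) and the coordinatewise reconstruction in (3) are then routine, the only point needing care in the latter being that the components $g_k$ take values in the open disc.
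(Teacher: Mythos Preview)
Your proof is correct and follows essentially the same route as the paper. The paper's proof is very terse: it derives (2) from Proposition~11.1.4 in Jarnicki--Pflug (which packages the equivalence between ``$\phi$ has a holomorphic left inverse'' and ``$\phi$ is a complex geodesic''), obtains (1) from that proposition together with Theorem~\ref{EquivalenceForEquality}, and sketches (3) via coordinate functions. Your argument is the self-contained version of the same ideas: for (1) you replace the appeal to Theorem~\ref{EquivalenceForEquality} by the direct contractibility chain $d^\Omega\le\delta^\Omega\le d^\D\le d^\Omega$ using the left inverse $F$; for (2) you reprove the relevant piece of the Jarnicki--Pflug proposition by extracting a left inverse $h^{-1}\circ g$ from the Schwarz--Pick equality case; and for (3) you carry out the coordinatewise construction the paper only gestures at, then either invoke (2) or repeat the Schwarz--Pick argument to certify the resulting $\phi$ is a geodesic. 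The one step worth flagging as genuinely yours (and not explicit in the paper) is the observation that $\sup_{\underline\alpha\ne 0}d_{\underline\alpha}(z_i,z_j)=c^*(z_i,z_j)$, witnessed by $\underline\alpha=\underline g$ for a Carath\'eodory extremal $g$; this is what lets you pass from equality of Pick bodies to equality of Carath\'eodory distances without quoting the external reference.
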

\begin{proof}
	Since a Carath\'eodory geodesic comes with a left inverse, $(2)$ follows from Proposition 11.1.4. in \cite{J-P-Invariant} and this same result along with Theorem \ref{EquivalenceForEquality} implies $(1)$. $(3)$ follows from the facts that coordinate functions are in $\mathcal{O}(\D^m,D)$ and that any map in $\mathcal{O}(\D,\D^m)$ is a Cartesian product of $m$ functions from $\mathcal{O}(\D,\D)$.
\end{proof}

\section{Values of $d^\D$}\label{Values_Of_d_For_D}
Here we will describe the values of $d^\D _{\underline{\alpha}}$ for all nonzero $\underline{\alpha}$. First we give explicit values of $d^\D$ for the following $\underline{\alpha}$:
\begin{enumerate}
	\item $\underline{\alpha}=(0,0,\ldots,0,1)$.
	\item $\underline{\alpha}=(0,0,\ldots,0,\alpha_{n-1},\alpha_n)$ with $\alpha_{n-1}\alpha_n\neq 0$.
\end{enumerate}

\begin{proposition}
	If $\underline{\alpha}=(0,0,\ldots,0,1)$ then
	\begin{align*}
		d_{\underline{\alpha}} (z_i,z_j)&=0,\,\,\text{for}\,\,1\leq i,j\leq n-1,\\
		&=\prod_{l=1}^{n-1} \Big|\frac{z_l -z_n}{1- \overline{z_l }z_n}\Big|,\,\,\text{for}\,\,1\leq i\leq n-1\,\,\text{and}\,\,j=n.
			\end{align*}
\end{proposition}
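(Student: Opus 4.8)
The plan is to use part (\ref{RelationMinkoInvariant}) of Proposition \ref{PropsOfd}, which reduces everything to computing the Minkowski functional $\mu_{\mathscr{D}_n}$ at the specific vector $\underline{\alpha}=(0,\ldots,0,1)$, or equivalently (by the first displayed theorem identifying $\mu_{\mathscr{D}_n}$ with the quotient norm) to computing
\[
\mD(\underline{\alpha})=\inf\{\|g\|_\infty : g\in H^\infty(\D),\ g(z_l)=0\ \text{for}\ 1\le l\le n-1,\ g(z_n)=1\}.
\]
First I would observe that any such $g$ must vanish at $z_1,\ldots,z_{n-1}$, so by the standard factorization in $H^\infty(\D)$ we may write $g=B\cdot h$ where $B(\lambda)=\prod_{l=1}^{n-1}\frac{z_l-\lambda}{1-\overline{z_l}\lambda}$ is the finite Blaschke product with zeros exactly at the $z_l$, and $h\in H^\infty(\D)$ with $\|h\|_\infty=\|g\|_\infty$ (since $|B|=1$ on $\partial\D$). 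The constraint $g(z_n)=1$ becomes $h(z_n)=1/B(z_n)$, so $\|h\|_\infty\ge |h(z_n)|=1/|B(z_n)|$, giving the lower bound $\mD(\underline{\alpha})\ge 1/|B(z_n)|$. This bound is attained by the constant function $h\equiv 1/B(z_n)$, i.e. $g=B/B(z_n)$, so $\mD(\underline{\alpha})=1/|B(z_n)|=\prod_{l=1}^{n-1}\big|\frac{1-\overline{z_l}z_n}{z_l-z_n}\big|$. Hence $t:=1/\mu_{\mathscr{D}_n}(\underline{\alpha})=|B(z_n)|=\prod_{l=1}^{n-1}\big|\frac{z_l-z_n}{1-\overline{z_l}z_n}\big|$.

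Next I would plug this into part (\ref{RelationMinkoInvariant}): $d_{\underline{\alpha}}(z_i,z_j)=m(t\alpha_i,t\alpha_j)$. For $1\le i,j\le n-1$ we have $\alpha_i=\alpha_j=0$, so $m(t\alpha_i,t\alpha_j)=m(0,0)=0$; alternatively this is immediate from the last sentence of part (3) of Proposition \ref{PropsOfd} since $\alpha_i=\alpha_j$. For $1\le i\le n-1$ and $j=n$ we get $d_{\underline{\alpha}}(z_i,z_n)=m(0,t)=|t|=\prod_{l=1}^{n-1}\big|\frac{z_l-z_n}{1-\overline{z_l}z_n}\big|$, which is the claimed value.

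I expect no real obstacle here; the only point requiring a little care is justifying that the infimum defining $\mD$ can be taken over functions vanishing at the $z_l$ and factored through the Blaschke product $B$ without changing the norm, which is a routine application of the inner-outer (or just Blaschke) factorization in $H^\infty(\D)$ together with $|B|\equiv 1$ on the circle. One should also note that the $z_l$ are distinct, so $B$ has simple zeros and $B(z_n)\ne 0$, making the expression well defined. Everything else is bookkeeping with the Möbius distance $m$.
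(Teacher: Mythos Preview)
Your argument is correct. The core computation is the same Blaschke factorization that the paper uses, but you organize it a little differently: the paper works directly from the definition of $d_{\underline{\alpha}}$ as a supremum over $f\in\mathcal{O}(\D,\D)$ with $\underline{f}\in\C\cdot\underline{\alpha}$, factoring any such $f$ as $f_n\cdot\varphi$ with $\varphi(z)=\prod_{l=1}^{n-1}\frac{z_l-z}{1-\overline{z_l}z}$ and $f_n\in\mathcal{O}(\D,\D)$, and then observing that $\varphi$ itself is admissible and extremal. You instead compute $\mu_{\mathscr{D}_n}(\underline{\alpha})$ via the quotient-norm theorem and then invoke Proposition~\ref{PropsOfd}(\ref{RelationMinkoInvariant}). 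Both routes are short and rest on the identical factorization step; yours has the minor advantage of making explicit that what is really being computed is $\mu_{\mathscr{D}_n}(\underline{\alpha})=1/|B(z_n)|$, which ties the proposition cleanly into the general framework of Section~\ref{Decription_of_d}.
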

\begin{proof}
	It is easy to see that for $1\leq i,j\leq n-1$, $d_{\underline{\alpha}} (z_i,z_j)=0$. Now suppose $j=n$ and $1\leq i\leq n-1$. There is an $f\in \mathcal{O}(\D,\D)$ such that $(f(z_1),\ldots,f(z_n))\in \C\cdot\underline{\alpha}$ and $d_{\underline{\alpha}} (z_i,z_n)=m(f(z_i),f(z_n))$. Since $f(z_1)=\cdots=f(z_{n-1})=0$, there is an $f_n\in\mathcal{O}(\D,\D)$ such that $$f(z)=f_n (z)\prod_{l=1}^{n-1}\Big(\frac{z_l -z}{1-\overline{z_l}z}\Big).$$
	Note that this $f$ satisfies $m(f(z_i),f(z_n))\leq \prod_{l=1}^{n-1} \Big|\frac{z_l -z_n}{1- \overline{z_l }z_n}\Big|$. Now the function $$\varphi(z)=\prod_{l=1}^{n-1}\Big(\frac{z_l -z}{1-\overline{z_l}z}\Big)$$ is in $\mathcal{O}(\D,\D)$ and $(\phi(z_1),\ldots,\phi(z_n))\in \C\cdot\alpha$. Hence 
	\begin{align*}
	m(\varphi(z_i),\varphi(z_n))\leq d_{\underline{\alpha}} (z_i,z_n)=m(f(z_i),f(z_n))\leq \prod_{l=1}^{n-1} \Big|\frac{z_l -z_n}{1- \overline{z_l }z_n}\Big|=m(\varphi(z_i),\varphi(z_n))
	\end{align*}
	and this completes the proof.
\end{proof}
Before proceeding with the other $\underline{\alpha}$, note that by (5) and (6) in Proposition \ref{PropsOfd}, for a given pair $(\beta_1,\beta_2)$ of distinct complex numbers and any two distinct $z_1,z_2$ in $\Omega$, $m(t\beta_1,t\beta_2)=c^* _\Omega(z_1,z_2)$ where $t=\frac{1}{\mu_{\mathscr{D}_\Omega (z_1,z_2)}(\beta_1,\beta_2)}$. 

For a given $\underline{\alpha}=(0,0,\ldots,0,\alpha_{n-1},\alpha_n)$ with $\alpha_{n-1}\alpha_n\neq 0$, suppose 
\begin{align}\label{DistinctComponents}
	\alpha^\prime _{n-1} =\frac{\alpha_{n-1}}{\prod_{l=1}^{n-2} \big(\frac{z_l -z_{n-1}}{1-\overline{z_l}z_{n-1}}\big)}\neq \frac{\alpha_{n}}{\prod_{l=1}^{n-2} \big(\frac{z_l -z_{n}}{1-\overline{z_l}z_{n}}\big)}=\alpha^\prime _{n}.
\end{align}
Then there is a $t$ as above, depending on $\underline{\alpha}$ and $(z_1,\ldots,z_n)$, such that $$m(t\alpha^\prime _{n-1},t\alpha^\prime _{n})=c^*_\D (z_{n-1},z_n)=m(z_{n-1},z_n).$$

By the Schwarz-Pick theorem, there is a $\varphi_t \in \autd$ such that $\varphi_t (z_{n-1})=t\alpha^\prime _{n-1}$ and $\varphi_t (z_{n})=t\alpha^\prime _{n}$. Now consider the function 
\begin{align}\label{FunctionForDiffntAlpha2}
	f_t (z)=\varphi_t (z)\prod_{l=1}^{n-2} \Big(\frac{z_l -z}{1-\overline{z_l}z}\Big).
\end{align}
This $f_t$ is in $\mathcal{O}(\D,\D)$ and $(f_t(z_1),\ldots,f_t(z_n))=(0,0,\ldots,0,t\alpha_{n-1},t\alpha_{n})\in \C\cdot\underline{\alpha}$.

\begin{proposition}
	Suppose we are given $\underline{\alpha}=(0,0,\ldots,0,\alpha_{n-1},\alpha_n)$ with $\alpha_{n-1}\alpha_n\neq 0$. Then we have the following.
	\begin{enumerate}
		\item $d_{\underline{\alpha}} (z_i,z_j)=0,\,\,\text{for}\,\,1\leq i,j\leq n-2.$
		\item If $\frac{\alpha_{n-1}}{\prod_{l=1}^{n-2} \big(\frac{z_l -z_{n-1}}{1-\overline{z_l}z_{n-1}}\big)}= \frac{\alpha_{n}}{\prod_{l=1}^{n-2} \big(\frac{z_l -z_{n}}{1-\overline{z_l}z_{n}}\big)}$, then 
		\begin{align*}
				d_{\underline{\alpha}} (z_i,z_j)&=|f_0 (z_j)|,\,\,\text{for}\,\,1\leq i\leq n-2,j=n-1,n,\\
			&=m(f_0 (z_{n-1}),f_0 (z_n)),\,\,\text{for}\,\,i=n-1,j=n
		\end{align*}
	where $f_0 (z)=\prod_{l=1}^{n-2}\big(\frac{z_l -z}{1-\overline{z_l}z}\big)$.
	\item  If $\frac{\alpha_{n-1}}{\prod_{l=1}^{n-2} \big(\frac{z_l -z_{n-1}}{1-\overline{z_l}z_{n-1}}\big)}\neq \frac{\alpha_{n}}{\prod_{l=1}^{n-2} \big(\frac{z_l -z_{n}}{1-\overline{z_l}z_{n}}\big)}$, then 
		\begin{align*}
		d_{\underline{\alpha}} (z_i,z_j)&=|f_t (z_j)|,\,\,\text{for}\,\,1\leq i\leq n-2,j=n-1,n,\\
		&=m(f_t (z_{n-1}),f_t (z_n)),\,\,\text{for}\,\,i=n-1,j=n
	\end{align*}
where $f_t (z)$ is given by (\ref{FunctionForDiffntAlpha2}).
	\end{enumerate}
\end{proposition}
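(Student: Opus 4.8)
The plan is to push every competing function through a single division by a fixed finite Blaschke product, reducing each assertion to two‑point data already handled in Theorem~\ref{D2Cara} and its proof. Write $B(z)=\prod_{l=1}^{n-2}\frac{z_l-z}{1-\overline{z_l}z}$, so that $f_0=B$ and $\alpha'_{n-1}=\alpha_{n-1}/B(z_{n-1})$, $\alpha'_n=\alpha_n/B(z_n)$. I would first dispose of (1): when $1\le i,j\le n-2$ we have $\alpha_i=\alpha_j=0$, so every $f\in\mathcal{O}(\D,\D)$ with $\underline{f}\in\C\cdot\underline{\alpha}$ vanishes at $z_i$ and $z_j$, whence $d_{\underline{\alpha}}(z_i,z_j)=0$ by (3) in Proposition~\ref{PropsOfd}; the same remark also covers the entries with $i,j\le n-2$ appearing implicitly in (2) and (3), where $m(f_0(z_i),f_0(z_j))=m(f_t(z_i),f_t(z_j))=0$ as well.

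Next I would set up the reduction. If $f\in\mathcal{O}(\D,\D)$ satisfies $\underline{f}=\lambda\underline{\alpha}$, then $f$ vanishes at $z_1,\dots,z_{n-2}$, so $f=gB$ with $g$ holomorphic; since $B$ is inner, $\|g\|_\infty=\|f\|_\infty\le1$, and $g(z_{n-1})=\lambda\alpha'_{n-1}$, $g(z_n)=\lambda\alpha'_n$, while $f(z_{n-1})=\lambda\alpha_{n-1}$ and $f(z_n)=\lambda\alpha_n$. Thus bounding all the quantities $m(f(z_i),f(z_j))$ in play amounts to bounding $|\lambda|$. In case (2) the normalized values coincide, $g(z_{n-1})=g(z_n)=\lambda\alpha'_{n-1}$, so the only restriction is $|\lambda\alpha'_{n-1}|=|g(z_{n-1})|\le1$; this yields $|f(z_j)|=|\lambda\alpha_j|\le|B(z_j)|=|f_0(z_j)|$ for $j\in\{n-1,n\}$, and for the pair $(n-1,n)$, writing $f(z_{n-1})=g(z_{n-1})B(z_{n-1})$ and $f(z_n)=g(z_n)B(z_n)$ with $|g(z_{n-1})|\le1$ and invoking the scaling inequality from the proof of Theorem~\ref{D2Cara}, one gets $m(f(z_{n-1}),f(z_n))\le m(B(z_{n-1}),B(z_n))=m(f_0(z_{n-1}),f_0(z_n))$. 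All of these bounds are attained by $f_0$, which lies in $\C\cdot\underline{\alpha}$ precisely because the hypothesis of (2) makes $(f_0(z_{n-1}),f_0(z_n))$ a scalar multiple of $(\alpha_{n-1},\alpha_n)$.

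For case (3) the normalized values are distinct, so by the Schwarz--Pick lemma $m(\lambda\alpha'_{n-1},\lambda\alpha'_n)=m(g(z_{n-1}),g(z_n))\le m(z_{n-1},z_n)=m(t\alpha'_{n-1},t\alpha'_n)$, and applying the scaling inequality to the nonparallel pair $(t\alpha'_{n-1},t\alpha'_n)$ with factor $\lambda/t$ (legitimate by (\ref{DistinctComponents})) forces $|\lambda|\le|t|$; hence $|f(z_j)|=|\lambda\alpha_j|\le|t\alpha_j|=|f_t(z_j)|$ for $j\in\{n-1,n\}$ and, by the same inequality, $m(f(z_{n-1}),f(z_n))=m(\lambda\alpha_{n-1},\lambda\alpha_n)\le m(t\alpha_{n-1},t\alpha_n)=m(f_t(z_{n-1}),f_t(z_n))$, the sub‑case $\alpha_{n-1}=\alpha_n$ being trivial. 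Equality is attained by $f_t$ of (\ref{FunctionForDiffntAlpha2}), which was built so that $(f_t(z_1),\dots,f_t(z_n))=(0,\dots,0,t\alpha_{n-1},t\alpha_n)\in\C\cdot\underline{\alpha}$. The genuinely delicate point is this last step: converting a hyperbolic‑distance bound on the normalized boundary values into the length bound $|\lambda|\le|t|$ requires using the Schwarz--Pick lemma together with the dichotomy $m(sw_1,sw_2)\le m(w_1,w_2)\iff|s|\le1$ (for $w_1\ne w_2$) from the proof of Theorem~\ref{D2Cara}, and one must also keep straight the degenerate sub‑cases (targets equal, or $B(z_{n-1})=B(z_n)$) where the relevant $m$‑values vanish. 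Everything else is the routine division‑by‑a‑Blaschke‑product argument together with the already‑recorded fact that the supremum defining $d$ is attained ((5) in Proposition~\ref{PropsOfd}).
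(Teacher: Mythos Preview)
Your proof is correct and follows essentially the same route as the paper's: both factor any competing function as $f_0$ times a holomorphic self-map of $\D$, apply Schwarz--Pick to that quotient, and use the scaling monotonicity of $m$ from Theorem~\ref{D2Cara} to pin down the extremal multiplier. The only difference is cosmetic---you bound every competitor uniformly and then observe $f_0$ (resp.\ $f_t$) attains the bound, whereas the paper takes an attaining function and shows its multiplier has modulus one---and your parametrization via $\lambda\underline{\alpha}$ rather than $\lambda(f_0(z_1),\dots,f_0(z_n))$ is just a rescaling of the same data.
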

\begin{proof}
	\begin{enumerate}
		\item This statement is trivial. 
		\item For proving the second statement, let us suppose that we have $\frac{\alpha_{n-1}}{f_0 (z_{n-1})}=\frac{\alpha_{n}}{f_0 (z_{n})}$, that is, $\C\cdot\underline{\alpha}=\C\cdot(f_0 (z_1),\ldots,f_0 (z_n))$. If $f\in \mathcal{O}(\D,\D)$ satisfies $(f(z_1),\ldots,f(z_n))\in \C\cdot(f_0 (z_1),\ldots,f_0 (z_n))$, then there is an $f_1\in \mathcal{O}(\D,\D)$ such that $f(z)=f_0 (z)f_1(z)$ for all $z\in \D$. Hence, for any $i\neq n-1,n$, we have $m(f(z_i),f(z_{n-1}))=|f(z_{n-1})|\leq |f_0 (z_{n-1})|$. Also $ |f_0 (z_{n-1})|\leq d_{\underline{\alpha}} (z_i,z_{n-1})$ and therefore we obtain that $d_{\underline{\alpha}} (z_i,z_{n-1})=|f_0 (z_{n-1})|$. Similarly we can show that $d_{\underline{\alpha}} (z_i,z_{n})=|f_0 (z_{n})|$. 
		
		Now suppose $f\in \mathcal{O}(\D,\D)$ satisfies $(f(z_1),\ldots,f(z_n))=\lambda (f_0 (z_1),\ldots,f_0 (z_n))$ for some $\lambda\in \C$ and $d_{\underline{\alpha}} (z_{n-1},z_{n})=m(f(z_{n-1}), f(z_n))$. So we have 
		\begin{align*}
		m(f_0(z_{n-1}), f_0(z_n))&\leq d_{\underline{\alpha}} (z_{n-1},z_{n})\\
		&=m(\lambda f_0(z_{n-1}),\lambda f_0(z_n))
		\end{align*}
		which implies $1\leq |\lambda|$. On the other hand, if $f_1\in \mathcal{O}(\D,\D)$ is the function that satisfies $f=f_0 f_1$, then we have 
		$$f_1 (z_{n-1})=\frac{f(z_{n-1})}{f_0 (z_{n-1})}=\lambda=\frac{f(z_{n})}{f_0 (z_{n})}=f_1 (z_{n}).$$
		Thus $|\lambda|\leq 1$. This concludes the proof.
		\item Let $\frac{\alpha_{n-1}}{\prod_{l=1}^{n-2} \big(\frac{z_l -z_{n-1}}{1-\overline{z_l}z_{n-1}}\big)}\neq \frac{\alpha_{n}}{\prod_{l=1}^{n-2} \big(\frac{z_l -z_{n}}{1-\overline{z_l}z_{n}}\big)}$ hold. We consider $f_t$ and $\varphi_t$ as constructed in (\ref{FunctionForDiffntAlpha2}), and $f_0$ as above. For $i\neq n-1,n$, there is an $f\in \mathcal{O}(\D,\D)$ such that $(f(z_1),\ldots,f(z_n))\in \C\cdot\underline{\alpha}$ and $d_{\underline{\alpha}} (z_i,z_{n-1})=m(f(z_i),f(z_{n-1}))=|f(z_{n-1})|$. Moreover, there is a $\lambda\in \C$ and an  $f_1 \in \mathcal{O}(\D,\D)$ such that $(f(z_1),\ldots,f(z_n))=\lambda (f_t(z_1),\ldots,f_t(z_n))$ and $f=f_0 f_1$. Clearly, $f_1 (z_{n-1})=\lambda \varphi_t(z_{n-1})$ (and similarly $f_1 (z_{n})=\lambda \varphi_t(z_{n})$). So we can write
		\begin{align*}
			m(\lambda\varphi_t (z_{n-1}),\lambda\varphi_t (z_{n}))&=m(f_1 (z_{n-1}),f_1 (z_{n}))\\
			&\leq m(z_{n-1},z_n)\\
			&=m(\varphi_t (z_{n-1}),\varphi_t (z_{n}))
		\end{align*}
		which suggests $|\lambda|\leq 1$. Also $|f_t (z_{n-1})|\leq d_{\underline{\alpha}} (z_i,z_{n-1})=|f(z_{n-1})|=|\lambda f_t (z_{n-1})|$ implies $1\leq |\lambda|$. Similarly, one can show that  $ d_{\underline{\alpha}} (z_i,z_{n})=|f_t (z_{n})|$.
		
		Now we show that $d_{\underline{\alpha}} (z_{n-1},z_n)=m(f_t (z_{n-1}),f_t (z_n))$. We find $g,g_1\in \mathcal{O}(\D,\D)$ and $\lambda\in \C$ such that $(g(z_1),\ldots,,g(z_n))=\lambda (f_t(z_1),\ldots,f_t(z_n))$,  $d_{\underline{\alpha}} (z_{n-1},z_n)=m(g (z_{n-1}),g(z_n))$ and $g=f_0 g_1$. This gives us $g_1(z_{n-1})=\lambda\varphi_t(z_{n-1})$ and $g_1(z_{n})=\lambda\varphi_t(z_{n})$. So we have
		\begin{align*}
			m(\lambda\varphi_t (z_{n-1}),\lambda\varphi_t (z_n))&=m(g_1 (z_{n-1}),g_1 (z_n))\\
			&\leq m(z_{n-1},z_n)\\
			&=m(\varphi_t (z_{n-1}),\varphi_t (z_n))
		\end{align*}
		and hence $|\lambda|\leq 1$. On the other hand \begin{align*}
			m(f_t (z_{n-1}),f_t (z_n))&\leq d_{\underline{\alpha}} (z_{n-1},z_n)\\
			&=m(g (z_{n-1}),g(z_n))\\
			&=m(\lambda f_t (z_{n-1}),\lambda f_t (z_n))
		\end{align*}
	implies $|\lambda|\geq 1$. The proof is now complete.
	\end{enumerate}
	\end{proof}
	Next let us describe the boundary of $\sD_n$. It can be shown that 
	\begin{align*}
		\partial\sD_n=&\{(w,\ldots,w):w\in \partial\D\}\\
		&\cup\big[\cup_{1\leq i<j\leq n}\{(w_1,\ldots,w_n)\in\D^n:w_i\neq w_j, m(w_i,w_j)=d_{\underline{w} }(z_i,z_j)\}\big].
	\end{align*}
	\begin{lemma}\label{BoundaryRelationOfDnAndDn-1}
	$(w_1,\ldots,w_n)\in \partial\sD_n\cap \D^n$ if and only if $$\Big(\frac{\varphi_{w_n}(w_1)}{\varphi_{z_n}(z_1)},\ldots,\frac{\varphi_{w_n}(w_{n-1})}{\varphi_{z_n}(z_{n-1})}\Big)\in\partial\sD_{n-1}$$ where $\varphi_u (v)=\frac{u-v}{1-\overline{u}v},\sD_n=\sD_\D (z_1,\ldots,z_n),$ and $\sD_{n-1}=\sD_\D (z_1,\ldots,z_{n-1})$.
	\end{lemma}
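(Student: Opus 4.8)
The plan is to peel off the last node by composing with a disc automorphism and dividing by a Blaschke factor, thereby reducing an interpolation problem on $z_1,\dots,z_n$ with data $w_1,\dots,w_n$ to one on $z_1,\dots,z_{n-1}$ with data $v_j:=\varphi_{w_n}(w_j)/\varphi_{z_n}(z_j)$, $j=1,\dots,n-1$. Throughout I take $\underline{w}=(w_1,\dots,w_n)\in\D^n$ (this is built into the left-hand side of the claim, and I will note at the end that the right-hand side forces it too), and I write $T(\underline{w})=(v_1,\dots,v_{n-1})$. The two facts doing the work are: $\varphi_u$ is an involution of $\D$ that extends to a homeomorphism of $\overline{\D}$, so $\varphi_{w_n}(\varphi_{z_n}(z_j)v_j)=\varphi_{w_n}(\varphi_{w_n}(w_j))=w_j$; and $\varphi_{z_n}$ is a single Blaschke factor vanishing exactly at $z_n$ inside $\D$ with modulus $1$ on $\partial\D$, and $z_j\neq z_n$ so $\varphi_{z_n}(z_j)\neq 0$.

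First I would establish the correspondence at the level of closed Pick bodies: $\underline{w}\in\overline{\sD_n}$ iff $T(\underline{w})\in\overline{\sD_{n-1}}$. Given $f\in\mathcal{O}(\D,\overline{\D})$ with $f(z_j)=w_j$, put $g=\varphi_{w_n}\circ f\in\mathcal{O}(\D,\overline{\D})$; then $g(z_n)=0$ and $g(z_j)=\varphi_{w_n}(w_j)$ for $j<n$, so $h:=g/\varphi_{z_n}$ is holomorphic on $\D$, satisfies $h(z_j)=v_j$, and, by the estimate discussed below, has $\|h\|_\infty\le\|g\|_\infty\le 1$; hence $T(\underline{w})=(h(z_1),\dots,h(z_{n-1}))\in\overline{\sD_{n-1}}$. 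Conversely, from $h\in\mathcal{O}(\D,\overline{\D})$ with $h(z_j)=v_j$, set $g=\varphi_{z_n}\cdot h\in\mathcal{O}(\D,\overline{\D})$ (since $|\varphi_{z_n}|\le 1$) and $f=\varphi_{w_n}\circ g\in\mathcal{O}(\D,\overline{\D})$; then $f(z_n)=\varphi_{w_n}(0)=w_n$ and $f(z_j)=\varphi_{w_n}(\varphi_{w_n}(w_j))=w_j$ for $j<n$ by the involution identity, so $\underline{w}\in\overline{\sD_n}$.

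Next I would run the same construction tracking strict norm bounds to obtain the analogue for the open bodies: for $\underline{w}\in\D^n$, $\underline{w}\in\sD_n$ iff $T(\underline{w})\in\sD_{n-1}$. Indeed, if $\|f\|_\infty<1$ then $f(\D)$ lies in a compact subset of $\D$, hence so does $g(\D)=\varphi_{w_n}(f(\D))$ (the automorphism $\varphi_{w_n}$ carries compacta of $\D$ to compacta of $\D$), so $\|g\|_\infty<1$ and $\|h\|_\infty\le\|g\|_\infty<1$; and conversely if $\|h\|_\infty<1$ then $\|g\|_\infty=\|\varphi_{z_n}h\|_\infty<1$, whence again $\|f\|_\infty=\|\varphi_{w_n}\circ g\|_\infty<1$. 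With both equivalences in hand, and since $\sD_k$ is open so that $\partial\sD_k=\overline{\sD_k}\setminus\sD_k$, the lemma follows by pure set theory: $\underline{w}\in\partial\sD_n\cap\D^n$ $\iff$ $\underline{w}\in\overline{\sD_n}$ and $\underline{w}\notin\sD_n$ $\iff$ $T(\underline{w})\in\overline{\sD_{n-1}}$ and $T(\underline{w})\notin\sD_{n-1}$ $\iff$ $T(\underline{w})\in\partial\sD_{n-1}$. Finally, if one starts only from $T(\underline{w})\in\partial\sD_{n-1}$, then $\underline{w}\in\overline{\sD_n}\subseteq\overline{\D}^n$, and were some $|w_j|=1$ the maximum principle would force $\underline{w}=(\zeta,\dots,\zeta)$ with $|\zeta|=1$, whence $T(\underline{w})=\mathbf{0}\in\sD_{n-1}$, a contradiction; so $\underline{w}\in\D^n$ automatically, and the two sides of the stated equivalence are genuinely equivalent.

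The one step needing care — and the main (mild) obstacle — is the inequality $\|g/\varphi_{z_n}\|_\infty\le\|g\|_\infty$ for $g\in\mathcal{O}(\D,\overline{\D})$ vanishing at $z_n$, since $\varphi_{z_n}$ is small in the interior of $\D$. The clean argument is that $|\varphi_{z_n}|$ extends continuously to $\overline{\D}$ with boundary modulus $1$ and only the zero $z_n$ inside, so $\inf_{|z|=r}|\varphi_{z_n}(z)|\to 1$ as $r\to 1^-$; since $g/\varphi_{z_n}$ is holomorphic on $\D$, the maximum modulus principle on $\{|z|\le r\}$ gives $\sup_{|z|\le r}|g/\varphi_{z_n}|=\sup_{|z|=r}|g/\varphi_{z_n}|\le \bigl(\sup_{|z|=r}|g|\bigr)\big/\bigl(\inf_{|z|=r}|\varphi_{z_n}|\bigr)$, and letting $r\to 1^-$ bounds the left side by $\|g\|_\infty$; if moreover $\|g\|_\infty<1$ the same chain gives $\|g/\varphi_{z_n}\|_\infty<1$. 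The rest is routine bookkeeping with the maps $\varphi_u$.
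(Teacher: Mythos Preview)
Your proof is correct and follows essentially the same approach as the paper, which merely sketches the argument by listing the three ingredients you use in full: that $\partial\sD_n$ is the set of data attainable by norm-one functions but not by functions of norm strictly less than one, that $\partial\sD_n$ is invariant under the automorphisms $\Phi_\varphi$ of (\ref{AutosOfDnAssoctdWithAutD}) (your composition with $\varphi_{w_n}$), and the Schwarz-type factoring $g=\varphi_{z_n}\cdot h$. Your careful treatment of the norm estimate $\|g/\varphi_{z_n}\|_\infty\le\|g\|_\infty$ via the maximum principle on $\{|z|\le r\}$ and $r\to 1^-$, and your verification that the right-hand side forces $\underline{w}\in\D^n$, make explicit what the paper leaves to the reader.
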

	\begin{proof}
		The proof follows from making the following observations:
		\begin{enumerate}
			\item $\partial\sD_n$ consists of the points that can be attained by functions of norm one but not by functions of norm strictly less than one.
			\item $\partial\sD_n$ is invariant under the action of the elements of $Aut(\sD_n)$ described in (\ref{AutosOfDnAssoctdWithAutD}).
			\item For any $f\in \mathcal{O}(\D,\D)$ with $f(u)=0$, there is an $f_1\in \mathcal{O}(\D,\D)$ such that $f(z)=\varphi_u (z)f_1 (z)$.
		\end{enumerate}
	\end{proof}
	
	The following two theorems relate the boundary $\partial\sD_n$ to solvable Pick interpolation problems with unique solutions.

\begin{theorem}\label{BoundaryAndBlaschke}
	$(w_1,\ldots,w_n)\in \partial\sD_n$ if and only if there is a finite Blaschke product $\varphi$ of degree at most $(n-1)$ such that $\varphi(z_j)=w_j,1\leq j\leq n$.
\end{theorem}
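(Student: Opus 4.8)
Here is how I would approach the proof.

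The plan is to induct on $n$ using Lemma~\ref{BoundaryRelationOfDnAndDn-1}, after first disposing of the points of $\partial\sD_n$ that do not lie in $\D^n$. Since $\sD_n\subset\D^n$, any $(w_1,\dots,w_n)\in\partial\sD_n$ lies in $\overline{\sD_n}$, hence equals $(f(z_1),\dots,f(z_n))$ for some $f\in\mathcal O(\D,\overline{\D})$; if some $|w_j|=1$, the maximum modulus principle forces $f$ to be the unimodular constant $w_j$, so $(w_1,\dots,w_n)=(w,\dots,w)$ with $|w|=1$, which is interpolated by the Blaschke product $\varphi\equiv w$ of degree $0\le n-1$. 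Conversely, since a non-constant finite Blaschke product maps $\D$ onto $\D$, a finite Blaschke product can take a value of modulus one at a point of $\D$ only if it is a unimodular constant; so a tuple interpolable by a finite Blaschke product of degree $\le n-1$ and having a coordinate of modulus one is again such a constant tuple, and these tuples do lie in $\partial\sD_n$ (they belong to $\overline{\sD_n}$ but not to $\sD_n$). It therefore suffices to prove the equivalence for $(w_1,\dots,w_n)\in\D^n$.

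The base case $n=1$ is immediate, since $\partial\sD_1=\partial\D$ is exactly the set of unimodular constants, i.e. of values of Blaschke products of degree $\le 0$. Assume the statement for $n-1$ nodes and fix $\underline w=(w_1,\dots,w_n)\in\D^n$. With $\varphi_u(v)=\frac{u-v}{1-\overline u v}$ as in Lemma~\ref{BoundaryRelationOfDnAndDn-1}, put $\widetilde w_j=\dfrac{\varphi_{w_n}(w_j)}{\varphi_{z_n}(z_j)}$ for $1\le j\le n-1$, and note that $\varphi_{w_n}\in\autd$ is an involution while $\varphi_{z_n}$ is a Blaschke factor with a simple zero at $z_n$.

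Suppose first $\underline w\in\partial\sD_n$. By Lemma~\ref{BoundaryRelationOfDnAndDn-1}, $(\widetilde w_1,\dots,\widetilde w_{n-1})\in\partial\sD_{n-1}$, so by the inductive hypothesis there is a finite Blaschke product $\eta$ of degree $\le n-2$ with $\eta(z_j)=\widetilde w_j$. Then $\psi:=\varphi_{z_n}\,\eta$ is a finite Blaschke product of degree $\le n-1$ satisfying $\psi(z_j)=\varphi_{w_n}(w_j)$ for $j\le n-1$ and $\psi(z_n)=0=\varphi_{w_n}(w_n)$, and $\varphi:=\varphi_{w_n}\circ\psi$ is then a finite Blaschke product of degree $\le n-1$ with $\varphi(z_j)=w_j$ for all $j$. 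Conversely, let $\varphi$ be a finite Blaschke product of degree $d$ with $\varphi(z_j)=w_j\in\D$; since $\varphi$ is non-constant, $1\le d\le n-1$. Then $\psi:=\varphi_{w_n}\circ\varphi$ is a finite Blaschke product of degree $d$ vanishing at $z_n$, so $\eta:=\psi/\varphi_{z_n}$ is a finite Blaschke product of degree $d-1\le n-2$ (the simple zero at $z_n$ cancels, no pole appears, and $|\eta|\equiv 1$ on $\partial\D$), with $\eta(z_j)=\widetilde w_j$ for $j\le n-1$. By the inductive hypothesis $(\widetilde w_1,\dots,\widetilde w_{n-1})\in\partial\sD_{n-1}$, and Lemma~\ref{BoundaryRelationOfDnAndDn-1} gives $\underline w\in\partial\sD_n$. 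This closes the induction.

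The only routine inputs here are two standard facts about finite Blaschke products — that dividing one with a zero at $z_n$ by $\varphi_{z_n}$ yields a finite Blaschke product of degree one less, and that pre- or post-composition with a disc automorphism preserves the degree — together with keeping the degree count ($d\mapsto d-1$, $\le n-2\mapsto\le n-1$) compatible with ``at most $n-1$'', including the degenerate case $d=1$ where $\eta$ is a unimodular constant. I do not anticipate a serious obstacle. If one prefers to avoid the induction, an alternative is to observe that the scaling $\underline w\mapsto t\underline w$ together with the Pick criterion shows $\underline w\in\overline{\sD_n}$ (resp. $\underline w\in\sD_n$) exactly when the Pick matrix $\big(\tfrac{1-w_i\overline{w_j}}{1-z_i\overline{z_j}}\big)$ is positive semi-definite (resp. positive definite), so that $\partial\sD_n$ corresponds to singular positive semi-definite Pick matrices, whence the classical Pick--Nevanlinna rank theorem recalled in the Introduction applies; for the converse implication one uses in addition that $\big(\tfrac{1-\varphi(z_i)\overline{\varphi(z_j)}}{1-z_i\overline{z_j}}\big)$ has rank at most $\deg\varphi$, obtained by telescoping the kernel $\tfrac{1-\varphi(z)\overline{\varphi(w)}}{1-z\overline w}$ over the Blaschke factors of $\varphi$.
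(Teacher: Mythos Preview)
Your proof is correct and follows essentially the same inductive approach as the paper, using Lemma~\ref{BoundaryRelationOfDnAndDn-1} to reduce the degree by one via the Schur-type transformation $\varphi\mapsto(\varphi_{w_n}\circ\varphi)/\varphi_{z_n}$; the only cosmetic difference is that you dispose of the unimodular-coordinate case once before the induction, whereas the paper handles it inside each inductive step. Your closing remark about the Pick-matrix alternative is also valid, though the paper deliberately avoids it here since Theorem~\ref{BoundaryAndBlaschke} is later used as an ingredient in relating $\mu_{\sD_n}$ back to the Pick determinant.
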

\begin{proof}
	($\Leftarrow$): We apply induction on $n$. Let $\varphi_{n-1}$ be a finite Blaschke product of degree at most $(n-1)$ and $\varphi_{n-1}(z_j)=w_j,1\leq j\leq n$.
	
	For $n=1$ we have $\varphi_0(z)=e^{i\theta}$, so $\varphi_0(z_1)\in\partial\sD_1=\partial\D$.
	
	For $n=2$, $\varphi_1$ is either the constant function with modulus one or is an element of $\autd$. In any case, $(\varphi_1(z_1),\varphi_1(z_2))\in \sD_2$.
	
	Let the statement be true for $n=m$. Suppose $\varphi_m$ is a Blaschke product of degree at most $m$ and $(w_1,\ldots,w_{m+1})=(\varphi_m(z_1),\ldots,\varphi_m(z_{m+1}))$. If the degree of $\varphi_m$ is zero, the result is clear. To see the other case, consider the function $\psi_m =\varphi_{w_{m+1}}\circ \varphi_m (z)$. Then $\psi_m$ is a Blaschke product of degree at most $m$ with a zero at $z_{m+1}$. Hence the function $\psi_{m-1}(z)=\frac{\psi_m(z)}{\varphi_{z_{m+1}}(z)}$ is a Blaschke product of degree at most $m-1$. By induction hypothesis $(\psi_{m-1}(z_1),\ldots,\psi_{m-1}(z_{m}))\in \partial \sD_m$, that is,
	$$\Big(\frac{\varphi_{w_{m+1}}(w_1)}{\varphi_{z_{m+1}}(z_1)},\ldots,\frac{\varphi_{w_{m+1}}(w_{m})}{\varphi_{z_{m+1}}(z_{m})}\Big)\in\partial\sD_{m}.$$
	Now Lemma \ref{BoundaryRelationOfDnAndDn-1} gives rest of the argument.
	
	($\Rightarrow$): We again apply induction on $n$. 
	
	For $n=1$ and $2$, the result is clear. Suppose that the statement is true for $n=m$ and let $(w_1,\ldots,w_{m+1})\in \partial\sD_{m+1}$. If $|w_{m+1}|=1$, the constant function $\varphi_m(z)=w_{m+1}$ works. For $|w_{m+1}|<1$, we have $(w_1,\ldots,w_{m+1})\in \partial\sD_{m+1}\cap\D^n$. So by Lemma \ref{BoundaryRelationOfDnAndDn-1} we obtain 
	$$\Big(\frac{\varphi_{w_{m+1}}(w_1)}{\varphi_{z_{m+1}}(z_1)},\ldots,\frac{\varphi_{w_{m+1}}(w_{m})}{\varphi_{z_{m+1}}(z_{m})}\Big)\in\partial\sD_{m}.$$
	By induction hypothesis there is a Blaschke product $\varphi_{m-1}$ of degree at most $m-1$ such that $\varphi_{m-1}(z_j)=\frac{\varphi_{w_{m+1}}(w_j)}{\varphi_{z_{m+1}}(z_j)},1\leq j\leq m$. We now take the Blaschke product $$\varphi_m (z)=\varphi_{w_{m+1}}(\varphi_{z_{m+1}}(z)\varphi_{m-1}(z))$$
	 which is of degree at most $m$ and satisfies $\varphi_m (z_j)=w_{j}, 1\leq j\leq m+1$. 
	 
	 This completes our proof.
\end{proof}
\begin{theorem}\label{BoundaryAndUniqueness}
	The solution to a solvable interpolation problem $\D\ni z_j\mapsto w_j\in \D, 1\leq j\leq n,$ is unique if and only if $(w_1,\ldots,w_n)\in \partial \sD_n$.
\end{theorem}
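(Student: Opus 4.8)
The plan is to prove the two implications separately. The ``only if'' direction is a quick perturbation argument, and the ``if'' direction is an induction on $n$ built on the reduction in Lemma \ref{BoundaryRelationOfDnAndDn-1}.

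\textbf{Only if.} Suppose the problem $\D\ni z_j\mapsto w_j\in\D$ is solvable with a unique solution. Every solution lies in $\mathcal{O}(\D,\overline{\D})$, so $\underline{w}=(w_1,\dots,w_n)\in\overline{\sD_n}=\sD_n\sqcup\partial\sD_n$, and it suffices to rule out $\underline{w}\in\sD_n$. If $\underline{w}\in\sD_n$, pick $f\in B^\infty_1$ with $f(z_j)=w_j$ and set $B(z)=\prod_{k=1}^{n}\varphi_{z_k}(z)$, which vanishes at each $z_k$ and satisfies $\|B\|\le 1$. For every scalar $\varepsilon$ with $|\varepsilon|<1-\|f\|$ the function $f+\varepsilon B$ again lies in $B^\infty_1$ and still interpolates, and for $\varepsilon\neq 0$ it differs from $f$; this contradicts uniqueness. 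Hence $\underline{w}\in\partial\sD_n$.

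\textbf{If.} Let $\underline{w}\in\partial\sD_n$; then $\underline{w}\in\overline{\sD_n}$, so the problem is solvable (by Theorem \ref{BoundaryAndBlaschke}, by a Blaschke product of degree at most $n-1$), and only uniqueness remains. I argue by induction on $n$; the case $n=1$ is vacuous since $\partial\sD_1\cap\D=\emptyset$. If some $w_{j_0}$ is unimodular, the maximum principle forces every solution to equal the constant $w_{j_0}$ and we are done, so assume $\underline{w}\in\partial\sD_n\cap\D^n$; then $w_i\neq w_j$ for some pair, hence every solution is non-constant and therefore maps $\D$ into $\D$. Given a solution $g$, set $\psi=\varphi_{w_n}\circ g\in\mathcal{O}(\D,\D)$ as in the proof of Lemma \ref{BoundaryRelationOfDnAndDn-1}; since $\psi(z_n)=0$ we have $\psi=\varphi_{z_n}\psi_1$ with $\psi_1\in\mathcal{O}(\D,\overline{\D})$, and $\psi_1(z_j)=v_j:=\varphi_{w_n}(w_j)/\varphi_{z_n}(z_j)$ for $1\le j\le n-1$, so Lemma \ref{BoundaryRelationOfDnAndDn-1} gives $(v_1,\dots,v_{n-1})\in\partial\sD_{n-1}$. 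There is now a dichotomy: either some $|v_j|=1$, forcing $\psi_1$ to be the unimodular constant $v_1$; or $\psi_1\in\mathcal{O}(\D,\D)$, in which case $(v_1,\dots,v_{n-1})\in\partial\sD_{n-1}\cap\D^{n-1}$ and the induction hypothesis applied to the solvable $(n-1)$-point problem $z_j\mapsto v_j$ shows $\psi_1$ is its unique solution. Either way $\psi_1$ is determined by the data $(z_j,w_j)$, hence so are $\psi=\varphi_{z_n}\psi_1$ and $g=\varphi_{w_n}\circ(\varphi_{z_n}\psi_1)$, the last step using that $\varphi_{w_n}$ is an involution. Thus the solution is unique, completing the induction. (For $n=2$ one always lands in the first branch, since $\underline{w}\in\partial\sD_2\cap\D^2$ forces $m(w_1,w_2)=c^*_\D(z_1,z_2)=m(z_1,z_2)$, i.e.\ $|v_1|=1$.)

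\textbf{Main obstacle.} The delicate point is the factorization $\psi=\varphi_{z_n}\psi_1$ and its consequences: the quotient $\psi_1$ need not map into the \emph{open} disc --- exactly when an auxiliary target $v_j$ is unimodular it collapses to a unimodular constant --- so the induction must dispose of that degenerate branch directly, via the maximum principle, rather than by invoking the $(n-1)$-point theorem, whose hypothesis presupposes targets in $\D$. Once the dichotomy ``$\psi_1$ a unimodular constant'' versus ``$\psi_1\in\mathcal{O}(\D,\D)$'' is isolated, the only analytic inputs are the maximum-modulus bound $\|\psi_1\|\le 1$ (observation (3) in the proof of Lemma \ref{BoundaryRelationOfDnAndDn-1}) and the involutivity of $\varphi_{w_n}$; everything else is bookkeeping.
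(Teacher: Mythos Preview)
Your proof is correct. For the ``if'' direction you follow essentially the paper's route: the same Schur--type reduction via Lemma~\ref{BoundaryRelationOfDnAndDn-1} and induction on $n$. In fact you are more careful than the paper about the dichotomy ``$|v_j|=1$ for some $j$'' versus ``all $v_j\in\D$''; the paper applies the induction hypothesis to the reduced $(n-1)$--point problem without pausing over whether the reduced targets lie in the open disc, while you dispose of the unimodular branch separately via the maximum principle before invoking induction.

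The genuine difference is in the ``only if'' direction. The paper runs the same Schur reduction in reverse: it shows that uniqueness of the $(m+1)$--point solution forces uniqueness of the reduced $m$--point solution, then applies induction and Lemma~\ref{BoundaryRelationOfDnAndDn-1} to place $(w_1,\dots,w_{m+1})$ on $\partial\sD_{m+1}$. Your perturbation argument $f\mapsto f+\varepsilon B$ with $B=\prod_k\varphi_{z_k}$ bypasses the induction entirely and works in one line for every $n$. Your approach is more elementary and makes transparent \emph{why} interior points admit many solutions; the paper's approach has the virtue of symmetry (both directions use the same reduction) and, together with Theorem~\ref{BoundaryAndBlaschke}, keeps the whole discussion inside the Blaschke--product picture. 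Minor quibble: in the unimodular branch you write ``forcing $\psi_1$ to be the unimodular constant $v_1$''; it should be $v_j$ (the value with $|v_j|=1$), though of course once $\psi_1$ is constant all the $v_k$ coincide anyway.
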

\begin{proof}
	The result is clear if $n=1$. Let the statement be true for $n=m$.
	
	($\Leftarrow$):  Let $(w_1,\ldots,w_{m+1})\in \partial\sD_{m+1}$. If $|w_{m+1}|=1$, then the unique solution is the constant function $\varphi(z)=w_{m+1}$. For the other case, Lemma \ref{BoundaryRelationOfDnAndDn-1} gives us 
	$$\Big(\frac{\varphi_{w_{m+1}}(w_1)}{\varphi_{z_{m+1}}(z_1)},\ldots,\frac{\varphi_{w_{m+1}}(w_{m})}{\varphi_{z_{m+1}}(z_{m})}\Big)\in\partial\sD_{m}.$$ 
	By induction hypothesis and Theorem \ref{BoundaryAndBlaschke}, there is a unique Blaschke product $\varphi_{m-1}$ of degree at most $m-1$ such that $\varphi_{m-1}$ takes $z_j$ to  $\frac{\varphi_{w_{m+1}}(w_j)}{\varphi_{z_{m+1}}(z_j)},1\leq j\leq m$. If $g\in \mathcal{O}(\D,\D)$ is a solution to the problem $z_j\mapsto w_j, 1\leq j\leq m+1$, then $\frac{\varphi_{w_{m+1}}(g(z))}{\varphi_{z_{m+1}}(z)}$ sends $z_j$ to  $\frac{\varphi_{w_{m+1}}(w_j)}{\varphi_{z_{m+1}}(z_j)},1\leq j\leq m$. Using the uniqueness of $\varphi_{m-1}$ we find that $g(z)=\varphi_{w_{m+1}}(\varphi_{z_{m+1}}(z)\varphi_{m-1}(z))$. So this $g$ is unique.
	
	($\Rightarrow$): Let the solution to the interpolation problem $z_j\mapsto w_j,1\leq j\leq m+1$, be unique and let $g$ be the solution. Clearly, the problem $z_j\mapsto \frac{\varphi_{w_{m+1}}(w_j)}{\varphi_{z_{m+1}}(z_j)},1\leq j\leq m,$ is solvable. If $h_0$ is a solution to this problem, then it is easy to see that the function $h(z)=\varphi_{w_{m+1}}(\varphi_{z_{m+1}}(z)h_0 (z))$ solves the interpolation problem $z_j\mapsto w_j,1\leq j\leq m+1,$ and hence, $h=g$. This gives us $h_0(z) =\frac{\varphi_{w_{m+1}}(g(z))}{\varphi_{z_{m+1}}(z)}$ (note that $\varphi_{w_{m+1}}(g(z))$ has a zero at $z_{m+1}$). So the uniqueness of $g$ passes onto $h_0$ and induction hypothesis together with Lemma \ref{BoundaryRelationOfDnAndDn-1} imply that $(w_1,\ldots,w_{m+1})\in \partial\sD_{m+1}$. 
	
	This completes the proof.
\end{proof}

The theorems above give us a way to relate the domain $\sD_n$ and the degree of the Blaschke product interpolating the boundary points of $\sD_n$.
\begin{corollary}
	Let $(w_1,\ldots,w_n)\in \partial \sD_n$ and $\varphi$ a Blaschke product sending $z_j$ to $w_j$, $1\leq j\leq n$. Then $\varphi$ is of degree $k-1$ if and only if $k$ is the least positive integer for which there are $w_{i_1},\ldots, w_{i_k}\in \{w_j:1\leq j\leq n\}$ such that $i_j \neq i_l$ for $j\neq l$ and
	\begin{align*}
		(w_{i_1},\ldots, w_{i_k})\in \partial\sD_\D (z_{i_1},\ldots, z_{i_k}).
	\end{align*}
\end{corollary}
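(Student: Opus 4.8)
The plan is to characterise exactly which subsets $S\subseteq\{1,\dots,n\}$ satisfy $(w_i)_{i\in S}\in\partial\sD_\D\big((z_i)_{i\in S}\big)$, and then to read off the least admissible cardinality. First I would record the setup: since $(w_1,\dots,w_n)\in\partial\sD_n$, Theorem \ref{BoundaryAndUniqueness} shows that the problem $z_j\mapsto w_j$ has a \emph{unique} solution, and Theorem \ref{BoundaryAndBlaschke} shows that some finite Blaschke product of degree at most $n-1$ solves it; hence the $\varphi$ in the statement is this unique solution and $\deg\varphi\le n-1$, so in particular a subset of size $\deg\varphi+1$ exists inside $\{1,\dots,n\}$.

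Next I would prove the easy half: every $S$ with $|S|\ge\deg\varphi+1$ satisfies $(w_i)_{i\in S}\in\partial\sD_\D\big((z_i)_{i\in S}\big)$. Indeed $\varphi$ is a finite Blaschke product with $\varphi(z_i)=w_i$ for all $i\in S$, of degree $\deg\varphi\le|S|-1$, so Theorem \ref{BoundaryAndBlaschke} applied to the nodes $(z_i)_{i\in S}$ gives the claim at once. Taking $|S|=\deg\varphi+1$ then produces a witnessing subset of exactly that size.

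For the converse half I would argue by contradiction: suppose $S$ satisfies $(w_i)_{i\in S}\in\partial\sD_\D\big((z_i)_{i\in S}\big)$ but $|S|\le\deg\varphi$. Then $\deg\varphi\ge1$, so $\varphi$ maps $\D$ into $\D$ and every $w_j\in\D$. By Theorem \ref{BoundaryAndBlaschke} there is a finite Blaschke product $\psi$ of degree at most $|S|-1$ with $\psi(z_i)=w_i$ for $i\in S$; both $\psi$ and $\varphi$ solve the interpolation problem at the nodes $(z_i)_{i\in S}$, which is solvable (as $\varphi$ solves it) and, by Theorem \ref{BoundaryAndUniqueness} together with the boundary hypothesis on $S$, has a unique solution, whence $\psi=\varphi$. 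But then $\deg\varphi=\deg\psi\le|S|-1\le\deg\varphi-1$, a contradiction. Combining the two halves, the least $k$ for which some $S$ of size $k$ has $(w_{i_1},\dots,w_{i_k})\in\partial\sD_\D(z_{i_1},\dots,z_{i_k})$ is precisely $\deg\varphi+1$, which is the assertion.

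The argument is short, and I expect the only delicate point to be the bookkeeping around the degenerate case $\deg\varphi=0$: there $\varphi$ is a unimodular constant, so all $w_j$ lie on $\partial\D$; the converse half is then vacuous (no $S$ has $|S|\le0$), while the easy half still goes through because Theorem \ref{BoundaryAndBlaschke} admits degree-zero (constant, modulus one) Blaschke products and $\partial\sD_\D(z_j)=\partial\D$. So no genuine case split is required, provided one checks that the cited theorems cover the constant case; that verification is the ``hard part'', and it is routine.
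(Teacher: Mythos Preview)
Your proof is correct and follows essentially the same approach as the paper's: both arguments pivot on Theorems \ref{BoundaryAndBlaschke} and \ref{BoundaryAndUniqueness}, using the former to produce a Blaschke interpolant on a subset and the latter to force that interpolant to equal $\varphi$. The only cosmetic difference is in the ordering of the converse step: the paper first uses minimality of $k$ to show the subset-interpolant $\psi$ has degree exactly $k-1$ and then invokes uniqueness to identify $\psi$ with $\varphi$, whereas you invoke uniqueness immediately to get $\psi=\varphi$ and then read off the degree contradiction; the substance is identical.
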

\begin{proof}
($\Rightarrow$): Let $\varphi$ have degree $k-1$. Then using Theorem \ref{BoundaryAndBlaschke}, we see that for any $z_{i_1},\ldots, z_{i_k}\in \{z_1,\ldots, z_n\}$ we have
\begin{align*}
	(w_{i_1},\ldots, w_{i_k})=(\varphi(z_{i_1}),\ldots, \varphi(z_{i_k}))\in \partial\sD_\D (z_{i_1},\ldots, z_{i_k}).
\end{align*}
($\Leftarrow$): Let $k$ be the least positive integer  for which there are $w_{i_1},\ldots, w_{i_k}\in \{w_1,\ldots, w_n\}$ such that 
\begin{align*}
	(w_{i_1},\ldots, w_{i_k})\in \partial\sD_\D (z_{i_1},\ldots, z_{i_k}).
\end{align*}
Then by Theorem \ref{BoundaryAndBlaschke}, there is a Blaschke product $\psi$ of degree at most $k-1$ such that $\psi (z_{i_j})=w_{i_j}$, $1\leq j\leq k$. Note that $k$ is the least positive integer satisfying the above. If $\psi$ is a Blaschke product of degree less than $k-1$, then it contradicts with the minimality of $k$ (this follows from Theorem \ref{BoundaryAndBlaschke}). By Theorem \ref{BoundaryAndUniqueness}, $\psi$ is unique. Thus, $\psi$ is a Blaschke product of degree $k-1$ and $\psi=\varphi$. Thus the stated claim therefore holds true.
\end{proof}

Lastly, let us describe the values of $d_{\underline{\alpha}} ^\D$ for arbitrary $\underline{\alpha}$. We recall that if $\underline{\alpha}\in \mathbb{C}^n $ is a nonzero element and $t=\frac{1}{\mu_{\mathscr{D}_n} (\underline{\alpha})}$, then for any $i$ and $j$, $t\cdot\underline{\alpha}\in \partial\sD_n$ and $d_{\underline{\alpha}}(z_i,z_j)=m(t\alpha_i,t\alpha_j)$. Once one finds $t$, $d_{\underline{\alpha}}$ can easily be computed. We know that an interpolation problem $\D\ni z_j\mapsto w_j\in \D,1\leq i,j\leq n$, is solvable if and only if the matrix \begin{align*}
 	\mathcal{M}=\begin{pmatrix}
 		\frac{1-w_i\overline{w_j}}{1-z_i\overline{z_j}}
 	\end{pmatrix}_{1\leq i,j\leq n}
 \end{align*}
is positive semidefinite. Also, the solution is a unique if and only if $det(\mathcal{M})=0$. By Theorem \ref{BoundaryAndBlaschke} and \ref{BoundaryAndUniqueness} we can say that the positive number $t=\frac{1}{\mu_{\mathscr{D}_n} (\underline{\alpha})}$ is a root of the equation 
\begin{align}\label{EquationGivingMinkowski}
	det\begin{pmatrix}
		\frac{1-x^2 \alpha_i\overline{\alpha_j}}{1-z_i\overline{z_j}}
	\end{pmatrix}=0,
\end{align}
where the indeterminate is $x$. Let us now consider the following quantity
\begin{align}\label{ValueOfMinkowski}
	t=max\Big\{x\geq 0:x\,\,\text{is a root of}\,\,(\ref{EquationGivingMinkowski})\,\,\text{and}\,\,\begin{pmatrix}
		\frac{1-x^2 \alpha_i\overline{\alpha_j}}{1-z_i\overline{z_j}}
	\end{pmatrix}\geq \mathbf{0}\Big\}.
\end{align}

\begin{theorem}
	The quantity $t$ given by (\ref{ValueOfMinkowski}) is $\frac{1}{\mu_{\mathscr{D}_n} (\underline{\alpha})}$.
\end{theorem}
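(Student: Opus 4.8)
The plan is to show that the number $t$ defined in \eqref{ValueOfMinkowski} equals $\frac{1}{\mD(\underline{\alpha})}$ by identifying it with the threshold value of the parameter at which the scaled point $x\underline{\alpha}$ passes from the interior of $\overline{\sD_n}$ to its exterior. Recall from the facts about the Minkowski functional and from Proposition~\ref{PropsOfd}(\ref{RelationMinkoInvariant}) that $\frac{1}{\mD(\underline{\alpha})}\underline{\alpha}\in\partial\sD_n$, and that for $x\geq 0$ one has $x\underline{\alpha}\in\overline{\sD_n}$ precisely when $x\,\mD(\underline{\alpha})\leq 1$, i.e. $x\leq \frac{1}{\mD(\underline{\alpha})}$. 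The first step is to reformulate membership in $\overline{\sD_n}$ in matrix terms: by the Pick--Nevanlinna criterion, the interpolation problem $z_j\mapsto x\alpha_j$ is solvable by a function in $\mathcal{O}(\D,\overline{\D})$ (equivalently $x\underline{\alpha}\in\overline{\sD_n}$) if and only if the matrix $\mathcal{M}(x)=\bigl(\frac{1-x^2\alpha_i\overline{\alpha_j}}{1-z_i\overline{z_j}}\bigr)$ is positive semidefinite. Hence $\frac{1}{\mD(\underline{\alpha})}=\max\{x\geq 0:\mathcal{M}(x)\geq\mathbf{0}\}$, the maximum being attained because $\{x\geq 0:\mathcal{M}(x)\geq\mathbf 0\}$ is closed (entries of $\mathcal{M}(x)$ depend continuously on $x$ and positive semidefiniteness is a closed condition) and bounded (for $x$ large the $(j,j)$ entry $\frac{1-x^2|\alpha_j|^2}{1-|z_j|^2}$ becomes negative for any $j$ with $\alpha_j\neq 0$).

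Next I would argue that at $x_0:=\frac{1}{\mD(\underline{\alpha})}$ the matrix $\mathcal{M}(x_0)$ is singular, so that $x_0$ is in fact a root of \eqref{EquationGivingMinkowski}. This is where the earlier boundary results enter. Since $x_0\underline{\alpha}\in\partial\sD_n$, by Theorem~\ref{BoundaryAndUniqueness} the interpolation problem $z_j\mapsto x_0\alpha_j$ has a unique solution (when all the $x_0\alpha_j$ lie in $\D$; the degenerate case $|\alpha_j|=\mD(\underline{\alpha})$ for some $j$, forcing the unique constant solution, is handled directly since then $\mathcal{M}(x_0)$ visibly has a zero diagonal entry and is singular). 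Uniqueness of the solution is equivalent to $\det\mathcal{M}(x_0)=0$: indeed if $\mathcal{M}(x_0)$ were positive definite, the rank would be $n$ and, by the classical rank statement recalled in the introduction, the problem would admit infinitely many solutions, contradicting uniqueness. Thus $x_0$ lies in the set over which the maximum in \eqref{ValueOfMinkowski} is taken, giving $t\geq x_0$.

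For the reverse inequality, suppose $x$ is any nonnegative root of \eqref{EquationGivingMinkowski} with $\mathcal{M}(x)\geq\mathbf{0}$. Then the problem $z_j\mapsto x\alpha_j$ is solvable, so $x\underline{\alpha}\in\overline{\sD_n}$, whence $x\,\mD(\underline{\alpha})\leq 1$, i.e. $x\leq x_0$. Taking the maximum over all such $x$ yields $t\leq x_0$, and combined with the previous paragraph this gives $t=x_0=\frac{1}{\mD(\underline{\alpha})}$, as claimed.

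The main obstacle I anticipate is the careful bookkeeping around the degenerate cases: when some component $x_0\alpha_j$ has modulus $1$ (so the boundary point of $\sD_n$ lies on $\partial\D^n$ rather than in $\D^n$), Lemma~\ref{BoundaryRelationOfDnAndDn-1} and Theorems~\ref{BoundaryAndBlaschke}, \ref{BoundaryAndUniqueness} were stated for points in $\partial\sD_n\cap\D^n$, so one must check separately that the matrix characterization still pins down $t$ correctly. Beyond that, everything reduces to the equivalence ``$x\underline{\alpha}\in\overline{\sD_n}\iff\mathcal{M}(x)\geq\mathbf 0$'' together with ``$x\underline{\alpha}\in\partial\sD_n\iff$ unique solution $\iff\det\mathcal{M}(x)=0$,'' both of which are furnished by the Pick--Nevanlinna theorem and the boundary results already proved.
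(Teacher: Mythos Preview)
Your proposal is correct and follows essentially the same approach as the paper: both rely on the equivalences ``$\mathcal{M}(x)\geq\mathbf{0}\iff z_j\mapsto x\alpha_j$ solvable'' and ``$\det\mathcal{M}(x)=0\iff$ unique solution $\iff x\underline{\alpha}\in\partial\sD_n$'' furnished by the classical Pick--Nevanlinna theorem together with Theorem~\ref{BoundaryAndUniqueness}. The paper's proof is considerably terser---it simply observes that the defining conditions on $t$ force $t\underline{\alpha}\in\partial\sD_n=\{\mD=1\}$---whereas you spell out both inequalities $t\geq x_0$ and $t\leq x_0$, verify that the maximum is actually attained, and flag the degenerate boundary case; this extra care is not strictly needed but does no harm.
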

\begin{proof}
	We have 
	 \begin{align*}
			det\begin{pmatrix}
				\frac{1-t^2 \alpha_i\overline{\alpha_j}}{1-z_i\overline{z_j}}
			\end{pmatrix}=0\,\,\text{and}\,\,
			\begin{pmatrix}
				\frac{1-t^2 \alpha_i\overline{\alpha_j}}{1-z_i\overline{z_j}}
			\end{pmatrix}\geq \mathbf{0}.
		\end{align*}
	
	So the interpolation problem $z_j\mapsto t\alpha_j$ is solvable and its solution is unique. Theorem \ref{BoundaryAndUniqueness} implies that $t\cdot\underline{\alpha}\in \partial\sD_n$. Since $\partial\sD_n =\{\mu_{\mathscr{D}_n}=1\}$, the proof follows.
\end{proof}

\section{$\D^2$ and $\D^3$ with $n=3$}\label{D2_D3_description}
The case we discuss here involves only three point Pick interpolation problem on $\D^2$ and $\D^3$. For these two domains, we will show that statement $(2)$ in Theorem \ref{EquivalenceForEquality} holds and hence equality between two invariant functions introduced in Section \ref{Decription_of_d} takes place. We will prove the result for $\D^3$ because the proof for $\D^2$ is similar.

\begin{theorem}
	For any mutually distinct $z_1,z_2,z_3\in \D^3$ and any nonzero $\underline{\alpha}=(\alpha_1,\alpha_2,\alpha_3)$ in $\C^3$, the equality
	\begin{align*}
		d_{(\underline{z},\underline{\alpha})} ^{\D^3} (z_i, z_j)= \delta_{(\underline{z},\underline{\alpha})} ^{\D^3} (z_i, z_j)
	\end{align*}
	holds for all $i,j=1,2,3$.
\end{theorem}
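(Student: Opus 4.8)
The plan is to verify condition (2) of Theorem~\ref{EquivalenceForEquality}: for every nonzero $\underline\alpha$ I will produce a map $\phi\in\mathcal O(\D,\D^3)$ and a function $f\in\mathcal O(\D^3,\D)$ with $(f(z_1),f(z_2),f(z_3))\in\C\cdot\underline\alpha$, with $z_1,z_2,z_3\in\phi(\D)$, and with $f\circ\phi$ a non-constant Blaschke product of degree at most $2$. By Theorem~\ref{EquivalenceForEquality} this gives $d^{\D^3}_{(\underline z,\underline\alpha)}(z_i,z_j)=\delta^{\D^3}_{(\underline z,\underline\alpha)}(z_i,z_j)$ for one pair with $\alpha_i\ne\alpha_j$ and hence, by the ``moreover'' clause there, for every pair; pairs with $\alpha_i=\alpha_j$ are trivial, since then both sides vanish by Proposition~\ref{PropsOfd}(3) and the definition of $\delta$, and the case $\alpha_1=\alpha_2=\alpha_3$ is trivial for the same reason. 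So assume, after relabelling, $\alpha_1\ne\alpha_2$. Since $d$ and $\delta$ are contractible under holomorphic maps, they are invariant under $\mathrm{Aut}(\D^3)$ with $\underline\alpha$ fixed, so I may first normalise the three nodes by an automorphism of $\D^3$ (e.g.\ $z_1=0$).

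Next I record the target and reduce to constructing $\phi$. Put $t=1/\mu_{\sD_3}(\underline\alpha)$, where $\sD_3=\sD_{\D^3}(z_1,z_2,z_3)$, so that $t\underline\alpha\in\partial\sD_3\cap\D^3$, and by Proposition~\ref{PropsOfd}(5) fix $f\in\mathcal O(\D^3,\overline\D)$ with $f(z_j)=t\alpha_j$; being non-constant, $f$ actually maps $\D^3$ into $\D$. For any $\phi\in\mathcal O(\D,\D^3)$ with $\phi(\lambda_j)=z_j$ ($\lambda_1,\lambda_2,\lambda_3\in\D$ distinct) we have $\sD_{\D^3}(z_1,z_2,z_3)\subseteq\sD_\D(\lambda_1,\lambda_2,\lambda_3)$, hence $\mu_{\sD_\D(\underline\lambda)}(\underline\alpha)\le 1/t$; if equality holds, i.e.\ $t\underline\alpha\in\partial\sD_\D(\lambda_1,\lambda_2,\lambda_3)$, then by Theorem~\ref{BoundaryAndUniqueness} the interpolation problem $\lambda_j\mapsto t\alpha_j$ on $\D$ has a unique solution, which by Theorem~\ref{BoundaryAndBlaschke} is a Blaschke product of degree at most $2$; since $f\circ\phi$ is a solution of sup-norm at most one it must be exactly this Blaschke product, and it is non-constant because $t\alpha_1\ne t\alpha_2$. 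Thus everything reduces to exhibiting a single analytic disc through $z_1,z_2,z_3$ that is $\underline\alpha$-extremal for $\sD_{\D^3}(z_1,z_2,z_3)$, i.e.\ along which the Minkowski functional of the pulled-back Pick body agrees with that of $\sD_{\D^3}$ in the direction $\underline\alpha$.

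For the existence of such a disc I would invoke the complete solution of the three-point Nevanlinna--Pick problem in the polydisc (\cite{KosinskiPoly}): an extremal three-point datum on $\D^3$ is resolved along a rational analytic disc through the three nodes (a distinguished variety in the bidisc case, and its polydisc analogue in general), along which the extremal rational inner function restricts to a Blaschke product of degree at most $n-1=2$. Taking $\phi$ to be a uniformisation of that disc and $f$ a unimodular rescaling of the corresponding inner function so that $(f(z_1),f(z_2),f(z_3))\in\C\cdot\underline\alpha$ supplies exactly the pair demanded by Theorem~\ref{EquivalenceForEquality}(2). The case $\Omega=\D^2$, $n=3$ (the borderline $n=m+1$) is handled identically, using the corresponding two-dimensional result, which is already implicit in the Agler--McCarthy analysis of the bidisc.

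I expect the degree control in that last step to be the main obstacle. Mere existence of a disc through the three nodes is elementary --- choose the $\lambda_j$ near $\partial\D$ and solve each coordinate interpolation problem, the coordinate Pick matrices being diagonally dominant --- but a generic such disc is not $\underline\alpha$-extremal, and conversely the extremal value of $\mu_{\sD_{\D^3}}$ in the direction $\underline\alpha$ has to be attained along a disc of small enough complexity that $\deg(f\circ\phi)\le 2$. A linear disc cannot pass through three non-collinear nodes, and a coordinate projection $f=\pi_k$ realises only the very special directions $\underline\alpha\parallel(z_1^{(k)},z_2^{(k)},z_3^{(k)})$; so one genuinely has to exploit the fine structure of extremal solutions of the polydisc problem --- and, implicitly, that three points span at most a complex $2$-plane --- to keep the degree of the composition bounded by $2$ while still hitting the prescribed direction.
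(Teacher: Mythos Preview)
Your overall strategy matches the paper's: reduce to condition (2) of Theorem~\ref{EquivalenceForEquality}, normalise, then invoke Kosi\'nski's solution of the three-point problem on the polydisc to produce the required disc $\phi$ and function $f$. The reductions in your first two paragraphs are correct and essentially what the paper does.

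The gap is in the third paragraph. The result you cite from \cite{KosinskiPoly} (Theorem~1 there) applies only to \emph{non-degenerate} extremal three-point problems, i.e.\ those for which no two-point subproblem $z_i\mapsto t\alpha_i$, $z_j\mapsto t\alpha_j$ is already extremal. In the degenerate case --- say $m(t\alpha_2,t\alpha_3)=c^*_{\D^3}(z_2,z_3)$ --- there is no guarantee that Kosi\'nski's structure theorem yields a disc through all three nodes with the required degree bound on $f\circ\phi$; and your fourth paragraph, while candid about the difficulty, does not resolve it.

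The paper handles this case separately by an explicit construction: after normalising so that the extremal function is a coordinate projection (say $f=\pi_1$, so $t\alpha_j=z_{j1}$), it dilates the other coordinates of $z_2,z_3$ by factors $t_2,t_3\ge1$ to obtain auxiliary points $z'_j$ with all coordinate Poincar\'e distances equal to $m(z_{21},z_{31})$, applies Lemma~3 of \cite{KosinskiPoly} to a non-degenerate direction at the $z'_j$ to get a disc $\phi$ of the form $\lambda\mapsto(\lambda\varphi_1(\lambda),\lambda\varphi_2(\lambda),\lambda\varphi_3(\lambda))$ through the $z'_j$, then rescales the last two components by $1/t_2,1/t_3$ to pull $\phi$ back to a disc $\psi$ through the original $z_j$; the composition $\pi_1\circ\psi=\lambda\varphi_1(\lambda)$ is then a Blaschke product of degree two. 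Without this (or an equivalent) argument for the degenerate case, your proof is incomplete.
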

\begin{proof}
	Without loss of generality we may assume $z_1$ is the origin and $\underline{\alpha}=(\alpha_1,\alpha_2,\alpha_3)\in \partial\sD_{\D^3}(z_1,z_2,z_3)$ (this is permitted by $(3)$ in Proposition \ref{PropsOfd}). Our goal is to find a $\phi\in\mathcal{O}(\D,\D^3)$ such that statement (\ref{CompositionBlaschke}) in Theorem \ref{EquivalenceForEquality} holds. With this goal in mind, we additionally assume that $\alpha_1=0$, for having constructed a Blaschke product of degree at most two, we can always compose it with an element of $\autd$ and find the required form. 
	
	When $z_1,z_2,z_3$ lie on a Carath\'eodory geodesic, the result follows from Corollary \ref{CaraGeodesicCase}. So we assume that $z_1,z_2,z_3$ do not lie on a Carath\'eodory geodesic. Since $(\alpha_1,\alpha_2,\alpha_3)\in \partial\sD_{\D^3}(z_1,z_2,z_3)$, the interpolation problem $z_j\mapsto \alpha_j$ is solvable and extremal. 
	
	If the problem is non-degenerate, by Theorem 1 in \cite{KosinskiPoly} and Theorem \ref{EquivalenceForEquality}, we have our result. If the problem is degenerate, we can assume that $c^* _{\D^3}(z_2,z_3)=m(\alpha_2,\alpha_3)$. Since the Carath\'eodory extremals in $\D^3$ are coordinate functions, we can also assume that $(\alpha_1,\alpha_2,\alpha_3)=(z_{11},z_{21},z_{31})$ and $m(z_{21},z_{31})\geq m(z_{2j},z_{3j})$ where the notation $z_j=(z_{j1},z_{j2},z_{j3})$ is used. Now consider the points $z_1 ^\prime =z_1, z_2 ^\prime =(z_{21},t_2 z_{22},t_3 z_{23}),  z_2 ^\prime =(z_{31},t_2 z_{32},t_3 z_{33})$ where $t_2,t_3\geq 1$ satisfy $m(z_{21},z_{31})= m(t_jz_{2j},t_jz_{3j})$. Next we pick a $\underline{\gamma}=(\gamma_1,\gamma_2,\gamma_3)\in \partial\sD_{\D^3}(z_1 ^\prime,z_2 ^\prime,z_3 ^\prime)$ such that the interpolation problem $z_j ^\prime\mapsto \gamma_j$ is a non-degenerate one (it is easy to see that $z_1 ^\prime\neq z_2 ^\prime\neq z_3 ^\prime\neq z_1 ^\prime$ and hence such a $\underline{\gamma}$ exists). By Lemma 3 in \cite{KosinskiPoly}, there exist $\varphi_1,\varphi_2,\varphi_3\in \autd$ and $\lambda_2,\lambda_3\in \D$ such that the map $\phi(\lambda)=(\lambda\varphi_1(\lambda),\lambda\varphi_2(\lambda),\lambda_3\varphi(\lambda))$ satisfies $\phi(0)=z_1 ^\prime$ and $\phi(\lambda_j)=z_j ^\prime$. Since $t_2,t_3\geq 1$, the map $\psi(\lambda)=(\lambda\varphi_1(\lambda),\frac{1}{t_2}\lambda\varphi_2(\lambda),\frac{1}{t_3}\lambda_3\varphi(\lambda))$ belongs to $\mathcal{O}(\D,\D^3)$, and clearly it passes through the original points $z_1,z_2,z_3$. Composing $\psi$ with the coordinate function delivers the result we need.
\end{proof}

When $n\geq 4$, the above result does not take place in general. If $\underline{\alpha}$ is an element of the standard basis of $\C^n$, the functions $d$ and $\delta$ coincides with the functions studied in \cite{Coman2000} and equality between them does not take place for $n\geq 4$ (Remark 8.2.17 in \cite{J-P-Invariant}).

\vspace{0.1in} \noindent\textbf{Acknowledgements:}
The author's research is supported by GACR (Czech grant agency) grant 22-15012J. The author thanks Prof. Wlodzimierz Zwonek and Dr. Anwoy Maitra for fruitful discussions.


\begin{thebibliography}{99}
	
	\bibitem{A-M-1999} J. Agler and J. E. McCarthy,
	\textit{Nevanlinna-Pick interpolation on the bidisk}, J. Reine Angew. Math. 506 (1999), $191-204$.
	
	\bibitem{A-M-2000} J. Agler, J. E. McCarthy, \textit{The three point Pick problem on the bidisk} New York J. Math. 6 (2000), $227-236$.
	
	\bibitem{A-M} J. Agler and J.E. McCarthy, \textit{Pick Interpolation and Hilbert Function Spaces,} Graduate Studies in Mathematics Vol. 44, American Mathematical Society, Providence, 2002.
	
	\bibitem{Agler-Lykova-Young-2013} J. Agler, Z. A.  Lykova, N. J. Young,\textit{ Extremal holomorphic maps and the symmetrized bidisc}, Proc. Lond. Math. Soc. (3) 106 (2013), no. 4, $781-818$.
	
	\bibitem{BeardonMinda} A. F. Beardon, D. Minda, \textit{A multi-point Schwarz-Pick lemma}, J. Anal. Math. 92 (2004), 81–104. 
	
	\bibitem{B-S} T. Bhattacharyya and H. Sau, \textit{Holomorphic functions on the symmetrized bidisk - realization, interpolation and extension,}  J. Funct. Anal. \textbf{274} (2018), pp. $504-524$.
	
	\bibitem{BiswasMaitraBidisc} A. Biswas, A. Maitra, \textit{A characterization of the bidisc by a subgroup of its automorphism group}, J. Math. Anal. Appl. 504 (2021), no. 2, Paper No. 125434.
	
	\bibitem{Bon-Dun} F. F. Bonsall,J. Duncan, \textit{Complete normed algebras}, Ergeb. Math. Grenzgeb. Springer-Verlag, New York-Heidelberg, 1973. 
	
	\bibitem{Cartan1932} H. Cartan, \textit{Sur les transformations analytiques des domaines cercl\'es et semi-cercl\'es born\'es}. Math. Ann. 106, 540–573 (1932)
	
	\bibitem{CLW1992} B. Cole, K. Lewis, J.  Wermer, \textit{Pick conditions on a uniform algebra and von Neumann inequalities} J. Funct. Anal. 107 (1992), no. 2, $235-254$
	
	\bibitem{CLW1993} B. Cole, K. Lewis, J.  Wermer, \textit{A characterization of Pick bodies}, J. London Math. Soc. (2) 48 (1993), no. 2, $316-328$. 
	
	\bibitem{CW1993} B. Cole, J. Wermer, \textit{Pick interpolation, von Neumann inequalities, and hyperconvex sets}, Complex potential theory (Montreal, PQ, 1993), NATO Adv. Sci. Inst. Ser. C: Math. Phys. Sci., 439, Kluwer Acad. Publ., Dordrecht, 1994, $89-129$.
	
	\bibitem{CW1996} B. Cole, J. Wermer, \textit{Interpolation in the bidisk} J. Funct. Anal. 140 (1996), no. 1, $194-217$.
	
	\bibitem{CW1997} B. Cole, J. Wermer, \textit{Boundaries of interpolation bodies}, Houston J. Math. 23 (1997), no. 3, $499-527$. 
	
	\bibitem{Coman2000} D. Coman, The pluricomplex Green function with two poles of the unit ball of $\C^n$, Pacific J. Math. 194 (2000), no. 2, $257-283$.
	
	\bibitem{D-M} M.A. Dritschel and S. McCullough, \textit{Test functions, kernels, realizations and interpolation,} in: Operator Theory, Structured Matrices, and Dilations. Tiberiu Constantinescu Memorial Volume (ed. M. Bakonyi, A. Gheondea, M. Putinar and J. Rovnyak), Theta Foundation, Bucharest, 2007, pp. $153 -179$.
	
	\bibitem{EJLS2014} M. Elin, F. Jacobzon, M. Levenshtein, D. Shoikhet, \textit{The Schwarz lemma: rigidity and dynamics}, Harmonic and complex analysis and its applications, 135–230, Trends Math., Birkhäuser/Springer, Cham, 2014. 
	
	
		\bibitem{Isaev23} A. V. Isaev, \textit{Hyperbolic 2-dimensional manifolds with 3-dimensional automorphism group}, Geometry and Topology 12 (2008), no. 2, $643-711$.
	

	\bibitem{J-P-Invariant} M. Jarnicki, P. Pflug, \textit{Invariant Distances and Metrics in Complex Analysis,} 2nd extended edition, De Gruyter, Berlin, 2013.
	
	\bibitem{KosinskiPoly} Ł. Kosiński, \textit{Three-point Nevanlinna-Pick problem in the polydisc}, Proc. Lond. Math. Soc. (3)111(2015), no.4, $887-910$.
	
	\bibitem{Kosinski-Zwonek-Ball} Ł. Kosiński, W. Zwonek, \textit{Nevanlinna-Pick interpolation problem in the ball}, Trans. Amer. Math. Soc. 370 (2018), no. 6, $3931-3947$.
	
	\bibitem{Marshall} D. E. Marshall, \textit{An elementary proof of the Pick-Nevanlinna interpolation theorem}, Michigan Math. J. 21 (1975), $219-223$.
	
	\bibitem{Nevanlinna1919} R. Nevanlinna, \textit{Über beschränkte Funktionen, die in gegeben Punkten vorgeschrieben Werte annehmen}. Ann. Acad. Sci. fenn. Ser. A, 13, no. 1., 1919.
	
	\bibitem{Pick1916} G. Pick,  \textit{Über die Beschränkungen analytischer Funktionen, welche durch vorgegebene Funktionswerte bewirkt werden}, Math. Ann. 11 (1916),$7-23$.
	
	

\end{thebibliography}
\end{document}